\title{Some Graded Identities of The Cayley-Dickson Algebra}
\author{Fernando Henry}
\address{Departamento de Matem\'atica\hfil\break\indent
  Instituto de Matem\'atica e Estat\'\i stica \hfil\break\indent
  Universidade de S\~ao Paulo, Brazil} \email{henry@ime.usp.br}
\urladdr{http://www.ime.usp.br/$\sim$henry}
\theoremstyle{plain}\newtheorem{theo}{Theorem}[section]
\theoremstyle{plain}\newtheorem{lem}[theo]{Lemma}
\theoremstyle{plain}\newtheorem{prop}[theo]{Proposition}
\theoremstyle{plain}\newtheorem{cor}[theo]{Corollary}
\theoremstyle{definition}\newtheorem{defin}[theo]{Definition}
\theoremstyle{remark}\newtheorem{rem}[theo]{Remark}
\theoremstyle{definition}\newtheorem{notation}[theo]{Notation}
\theoremstyle{definition}
\theoremstyle{definition}
\begin{document}

\begin{abstract}
  We work to find a basis of graded identities for the octonion
  algebra. We do so for the $\mathbb{Z}_2^2$ and $\mathbb{Z}_2^3$
  gradings, both of them derived of the Cayley-Dickson process, the
  later grading being possible only when the characteristic of the
  scalars is not two.
\end{abstract}
\maketitle
\tableofcontents

\section{Definitions and Preliminary Results}

\begin{defin}[Graded Algebra]
  An algebra $A$ over the associative, commutative and unitary ring
  $R$ is said \emph{graded by the group $G$}, or simply
  \emph{$G$-graded}, if $A=\underset{a\in G}{\bigoplus}A_a$, as
  $R$-submodules and $A_aA_b\subseteq A_{ab}\ \forall a,b\in G$. We'll
  denote by $a_h$ the projection of $a$ in $A_h$.
\end{defin}

\begin{notation}[Graded Polynomial]
  Let $X$ be a set, $G$ a group and $R$ an associative, commutative
  and unitary ring, we denote by $V[X_G]$ the free groupoid freely
  generated by $X_G:=\{x^a\vert x\in X,\ a\in G\}$ (resp. $V[X_G]^\#$
  for the unitary case) and $R_G\{X\}:=RV[X_G]$ 
  (resp. $R_G\{X\}^\#:=RV[X_G]^\#$) the groupoid ring of $V[X_G]$ by
  $R$ (resp. $V[X_G]^\#$ by $R$). From now on we set $X=\{x_n\vert
  n\in\mathbb{N}\}$ and call $R_G\{X\}$ 
  (resp. $R_G\{X\}^\#$) the $G$-graded non associative polynomial ring
  (resp. the unitary $G$-graded non associative polynomial ring) over
  $R$.
  
  Set $g:V[X_G]\to G$ (resp. $g:V[X_G]^\#\to G$) recursively as
  $g(x^a):=a\ \forall x\in X$ and $g(uv):=g(u)g(v)$ (also let
  $g(1)=e$ the neutral element of $G$ for the $V[X_G]^\#$ case) and
  $R_G\{X\}_a:= \mathrm{lin.span}<g^{-1}(a)>$ (resp. $R_G\{X\}_a^\#:=
  \mathrm{lin.span}<g^{-1}(a)>$). Which makes $R_G\{X\}$
  (resp. $R_G\{X\}^\#$) into the $G$-graded free algebra freely
  generated by $X$ (resp. the unitary $G$-graded free algebra freely
  generated by $X$). From now on we drop the superscript of the
  variables and refer to them by $g$.
  
  Let $x,x_1,\ldots,x_n\in V[X_G]$ and $h,h_1,\ldots,h_n\in G$, set
  $\deg:V[X_G]\to G$ as $\deg u=$ the degree of $u$, $\deg_x:V[X_G]\to
  G$ as $\deg_xu=$ the degree of $u$ with respect to $x$ and
  $\deg_h:V[X_G]\to G$ as $\deg_h u$ as the degree of $u$ with respect
  to all $x$'s such that $g(x)=h$.  Let $f\in R_G\{X\} \setminus\{0\}$
  (resp. $f\in R_G\{X\}^\#\setminus\{0\}$) we shall denote by $\deg f$
  as $\max\{\deg u$| where $u$ is a monomial of $f\} $ and
  $\underline{\deg}f$ as $\min\{\deg u$| where $u$ is a monomial of
  $f\}$, analogue for $\deg_x$, $\underline{\deg}_x$, $\deg_h$ and
  $\underline{\deg}_h$.
  
  We shall call $f$ \emph{homogeneous} if $\deg f=\underline{\deg}f$,
  \emph{homogeneous in $x_1,\ldots,x_n$} (resp. \emph{homogeneous in
    $h_1,\ldots,h_n$}) if $\deg_{x_i}f=\underline{\deg_{x_i}}f$
  (resp. $\deg_{h_i}f=\underline{\deg_{h_i}}f$) for $i=1,
  \ldots,n$. Finally we shall say that $f$ is \emph{multihomogeneous}
  (resp. \emph{multicomponent homogeneous}) if it is homogeneous for
  every $x\in V[X_G]$ (resp. $h\in G$).
\end{notation}

\begin{defin}[Graded Polynomial Identity]
  Let $f(x_1,\ldots,x_n)\in R_G\{X\}$ (resp. $f(x_1\ldots,x_n)\in
  R_G\{X\}^\#$), $f$ is said a \emph{$G$-graded polynomial identity},
  \emph{polynomial identity}, \emph{$G$-P.I.} or simply an
  \emph{P.I.} of the $G$-graded $R$-algebra $A$ if for any $a_i\in
  A_{g(x_i)}\ i=1,\ldots,n\ f(a_1,\ldots,a_n)=0$. The set of all
  $G$-graded polynomial identity of the $G$-graded $R$-algebra $A$
  is called the $G$-graded T-ideal of $A$ and denoted by $T_G(A)$,
  it's easy to see that $T_G(A)$ form an ideal Which is invariant
  under any $G$-graded endomorphism.
  
  $T_G(A)$ is said \emph{homogeneous} (resp. \emph{multihomogeneous},
  \emph{multicomponent homogeneous}) if every homogeneous
  (resp. multihomogeneous, multicomponent homogeneous) of an
  polynomial in $T_G(A)$ lies in $T_G(A)$.
\end{defin}

The next definition and theorem are due to Shirshov on his search for
the answer of the Kurosh problem for alternate P.I. algebras and can
be found in \cite{shir1} and \cite{shir2}. For a long time those
articles were only available in russian and those proof could only be
found in english on \cite{4russos}. However, recently several papers
of Shirshov, including those two, received a translation to english in
\cite{selshir}.

\begin{defin}[r-words]
  Suppose that $X_G$ is ordered. Define recursively $<x_1>:=x_1$,
  $<x_1,\ldots,x_n,x_{n+1}>:=<x_1,\ldots,x_n>\cdot x_{n+1}$ for
  $n\geq1$. We shall call a non associative word of the form $<x_{i_1},
  \ldots,x_{i_n}>$ an \emph{$r_1$-word}. If the $r_1$-word $<x_{i_1},
  \ldots, x_{i_n}>$ is such that $i_1\leq\ldots\leq i_n$ then we shall
  call it an \emph{regular} $r_1$-word. Furthermore, we shall call a
  non associative word of the form $<u_i,\ldots,u_n>$, where each
  $u_i$ is an $r_1$-word (resp. a regular $r_1$-word), an \emph{
    $r_2$-word} (resp. a \emph{regular $r_2$-word}).
\end{defin}

\begin{theo}[Shirshov]
  Let $A$ be an alternative algebra and $v(x_1,\ldots,x_n)$ a non
  associative word. Then for any elements $a_1,\ldots,a_n\in A$ the
  element $v(a_1,\ldots,a_n)$ is representable in the form of a
  linear combination of regular $r_2$-words from $a_1\ldots,a_n$
  with the same length as $v$.
\end{theo}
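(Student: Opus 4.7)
The plan is to proceed by induction on the length $n$ of $v$. The base cases $n=1,2$ are immediate from the definitions: a single variable $x_i$ is a singleton $r_1$-word and hence a trivial regular $r_2$-word, while a two-letter product $x_ix_j$ equals the regular $r_2$-word $<<x_i>,<x_j>>$, no ordering being required between $i$ and $j$ at the outer $r_2$-level since every singleton $r_1$-word is regular.

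For $n\geq 3$, factor $v=v_1v_2$ with $|v_1|,|v_2|<n$. By the inductive hypothesis each $v_i(a_1,\ldots,a_n)$ is a linear combination of regular $r_2$-words of length $|v_i|$, so the theorem reduces to the auxiliary claim that the product of two regular $r_2$-words is a linear combination of regular $r_2$-words of the combined length. Given $u=<u_1,\ldots,u_k>$ and $w=<w_1,\ldots,w_\ell>$, I would unwind the outer left-normed structure of $w$ using the identity $a(bc)=(ab)c-(a,b,c)$, obtaining
\[
u\cdot w\;=\;<u_1,\ldots,u_k,w_1,\ldots,w_\ell>\;+\;\sum\pm(\text{associators of shorter subwords}).
\]
The leading term is already a regular $r_2$-word, so everything reduces to rewriting each associator term as a linear combination of regular $r_2$-words of the correct total length.

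This inner reduction is where the alternative hypothesis enters. Linearizing $(x,x,y)=0$ and $(x,y,y)=0$ yields $(x,y,z)+(y,x,z)=0$ and $(x,y,z)+(x,z,y)=0$, so the associator $(\,,\,,\,)$ is alternating in its three arguments on any alternative algebra; combined with the Moufang identities $(xy)(zx)=x(yz)x$, $x(y(xz))=(xyx)z$ and $((zx)y)x=z(xyx)$, the middle factor of an associator can be shifted so as to strictly reduce the right-nested depth inside it. Iterating these rewrites, each associator is replaced by a linear combination of regular $r_2$-words of length $|u|+|w|$ plus shorter-depth associators that are themselves handled by the inner recursion.

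The main obstacle is guaranteeing termination of this secondary rewriting: the alternating identities on their own merely permute the three arguments of an associator without reducing length, so a well-founded refinement is required --- concretely, a lexicographic order on the pair (total length, right-nesting depth), the outer induction controlling the first coordinate and each Moufang-style move strictly decreasing the second. Verifying at every step that the summands produced by a Moufang rearrangement are either regular $r_2$-words outright or again amenable to the secondary induction is the delicate bookkeeping at the heart of Shirshov's original argument, and I expect it to be the hardest part to carry out cleanly.
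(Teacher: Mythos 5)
A preliminary point: the paper itself gives no proof of this theorem. It is stated as Shirshov's result and referred to \cite{shir1}, \cite{shir2} and \cite{4russos}, so there is no in-paper argument to compare yours against; your proposal has to stand on its own.

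Your outer skeleton is reasonable: induction on length, the reduction to showing that the product of two regular $r_2$-words is a linear combination of regular $r_2$-words of the combined length, and the peeling-off of the leading term $<u_1,\ldots,u_k,w_1,\ldots,w_\ell>$ via $a(bc)=(ab)c-(a,b,c)$ are all correct first moves (and, under the paper's definition, that leading term is indeed automatically regular, since regularity of an $r_2$-word only constrains each $r_1$-factor internally, so its factors inherit regularity from the inductive hypothesis). The genuine gap is exactly the step you flag, and it is not mere bookkeeping: you must show that an associator $(u,w',w_\ell)$ whose arguments are regular $r_2$- and $r_1$-words is again a linear combination of regular $r_2$-words of the same length, and the tools you invoke do not visibly accomplish this. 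Skew-symmetry of the associator only permutes its three arguments and changes nothing about bracketing; the Moufang identities you quote have repeated variables, so on generic arguments you must use their full linearizations, which produce sums of many terms, and you give no argument that each such term is smaller in your proposed $(\text{length},\text{right-nesting depth})$ order --- indeed that order is not even defined for summands of the middle Moufang type $x(yz)x$, which are neither left- nor right-normed. So termination and closure of the secondary rewriting, which is the entire content of the lemma, is asserted rather than proved. Shirshov's actual argument (see \cite{4russos}) does not rewrite associators by Moufang moves; it exploits the near-derivation identities expressing $(x,y,zt)$ and $(xy,z,t)$ through associators of shorter arguments, with a different induction parameter, and it must separately handle the reordering of letters inside each $r_1$-factor to make it regular --- a step your sketch never addresses for the associator terms, whose expansion does not inherit regularity the way your leading term does.
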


The following two assertions are well known results in P.I. theory. An
non graded proof of them can be found in \cite{4russos}.

\begin{prop}\label{central}
  Let $A$ be a $G$-graded algebra over an infinite domain $F$,
  torsion free as an $F$-module and $K$ an extension domain of
  $F$. Then $T_G(A)$ is multihomogeneous, furthermore if $A$ is free
  as an $F$-module then $T_G(A)=T_G(A\bigotimes K)$ as algebras over
  $F$.
\end{prop}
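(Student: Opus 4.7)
The plan is to prove the two assertions in turn, both via the standard Vandermonde substitution trick, adapted to the graded setting and to modules over a domain.

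\textbf{Multihomogeneity.} Fix $f\in T_G(A)$ and a variable $x$ with $g(x)=h$. Since $R_G\{X\}$ is graded by total multidegree in each variable, I can write $f=\sum_{i=0}^{N}f_i$ where $f_i$ is homogeneous of degree $i$ in $x$. The point is to show each $f_i\in T_G(A)$; iterating over all variables then yields multihomogeneity of $T_G(A)$. Fix elements $a\in A_h$ and $a_2,\ldots,a_n$ in the appropriate graded components. For any $\alpha\in F$ we still have $\alpha a\in A_h$, hence
\[
0=f(\alpha a,a_2,\ldots,a_n)=\sum_{i=0}^{N}\alpha^{i}\,f_i(a,a_2,\ldots,a_n).
\]
Since $F$ is infinite, choose $N+1$ distinct scalars $\alpha_0,\ldots,\alpha_N$. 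The resulting linear system has Vandermonde matrix whose determinant $\prod_{j<k}(\alpha_k-\alpha_j)$ is a nonzero element of $F$ (recall $F$ is a domain). Multiplying by the adjugate gives $\det(V)\cdot f_i(a,a_2,\ldots,a_n)=0$ in $A$ for every $i$, and the torsion-freeness of $A$ as an $F$-module forces $f_i(a,a_2,\ldots,a_n)=0$. As $a,a_2,\ldots,a_n$ were arbitrary in the correct components, $f_i\in T_G(A)$.

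\textbf{Extension of scalars.} The inclusion $T_G(A\otimes_F K)\subseteq T_G(A)$ is immediate, since $A$ embeds as an $F$-subalgebra of $A\otimes K$ respecting the grading (freeness of $A$ gives injectivity of $A\to A\otimes K$, and $(A\otimes K)_g\supseteq A_g\otimes 1$). For the reverse inclusion take $f\in T_G(A)$; by the first part I may assume $f$ is multihomogeneous. Since $A$ is free over $F$, a homogeneous $F$-basis of $A$ can be chosen so that $(A\otimes K)_g=A_g\otimes_F K$. Given $b_i\in (A\otimes K)_{g(x_i)}$, write $b_i=\sum_{j}a_{ij}\otimes\beta_{ij}$ with $a_{ij}\in A_{g(x_i)}$ and $\beta_{ij}\in K$. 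Introduce commuting formal indeterminates $t_{ij}$ and consider
\[
F(\{t_{ij}\}):=f\!\left(\textstyle\sum_j t_{1j}a_{1j},\ \ldots,\ \sum_j t_{nj}a_{nj}\right)\in A[\{t_{ij}\}].
\]
Expanding (using that $f$ has coefficients in $F$ and the $t_{ij}$ are central), $F$ is a polynomial in the $t_{ij}$ with coefficients in $A$. For any choice of values $\alpha_{ij}\in F$ the arguments $\sum_j\alpha_{ij}a_{ij}$ still lie in $A_{g(x_i)}$, so $F(\{\alpha_{ij}\})=0$. Since $F$ is an infinite domain and $A$ is torsion-free, a multivariable Vandermonde argument (done one $t_{ij}$ at a time, exactly as in the first part) forces every coefficient of $F$ to vanish in $A$. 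Therefore $F\equiv 0$ in $A[\{t_{ij}\}]$, and specializing $t_{ij}\mapsto\beta_{ij}$ inside $A\otimes K$ yields $f(b_1,\ldots,b_n)=0$, as desired.

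\textbf{Main obstacle.} The only delicate point is the second step of Part 2: one must verify that the polynomial $F(\{t_{ij}\})$ is genuinely zero as an element of $A[\{t_{ij}\}]$, not merely as a function on $F$-tuples. This is where the two hypotheses on $F$ and $A$ combine: ``infinite domain'' supplies enough distinct scalars for Vandermonde, and ``torsion-free'' allows one to cancel the nonzero Vandermonde determinant inside $A$. Everything else is bookkeeping about how graded substitutions interact with tensor products.
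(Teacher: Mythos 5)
Your argument is correct. Note that the paper gives no proof of this proposition at all: it is stated as a well-known result with the non-graded proof deferred to the cited literature, and the Vandermonde/generic-scalars argument you present is exactly that standard proof, correctly adapted to the graded setting (the only genuinely graded input being that each component $A_h$ is an $F$-submodule, so the substitutions $\alpha a$ and $\sum_j t_{ij}a_{ij}$ remain in the component prescribed by $g(x_i)$). The one step worth spelling out is the final specialization: the evaluation $t_{ij}\mapsto\beta_{ij}$ is the map $A[\{t_{ij}\}]=A\otimes_F F[\{t_{ij}\}]\to A\otimes_F K$ induced by polynomial evaluation, which carries $f\bigl(\sum_j t_{1j}a_{1j},\ldots\bigr)$ to $f(b_1,\ldots,b_n)$ because $\sum_j a_{ij}\otimes\beta_{ij}=\sum_j\beta_{ij}(a_{ij}\otimes 1)$ in $A\otimes_F K$; with that made explicit, the proof is complete.
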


\begin{lem}\label{myfirstbaby}
  Let $A$ be a $G$-graded algebra over an infinite domain $F$ and
  torsion free as an $F$-module. Suppose that we have $\mu:G^2\to F$
  and $\nu:G^3\to F$ such that, $xy-\mu(g(x),g(y))yx=0$ and $(xy)z-
  \nu (g(x), g(y), g(z)) x(yz) =0$ are $G$-graded identities of
  $A$. Then $T_G(A)$ is generated by the two above ``scheme''
  identities and possibly some nilpotent identities.
\end{lem}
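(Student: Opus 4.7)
The plan is to first reduce to multihomogeneous polynomials via Proposition \ref{central}, and then show that modulo the T-ideal $I$ generated by the two scheme identities, every multihomogeneous monomial collapses to a scalar multiple of a canonical representative determined only by its multidegree.

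Fix a total order on $X_G$. Using the associativity scheme iteratively, any non-associative monomial of a given multidegree can be rewritten modulo $I$ into left-normed form $\langle y_1, \dots, y_n \rangle$, picking up a scalar factor from $F$ that depends on the sequence of $g$-degrees encountered. Using the commutativity scheme (combined where needed with additional re-bracketing) we then sort the leaves $y_1, \dots, y_n$ into non-decreasing order with respect to our chosen order on $X_G$, again up to a scalar. The output $m_\alpha$ depends only on the multidegree $\alpha$; hence every monomial $u$ of multidegree $\alpha$ satisfies $u \equiv c_u\, m_\alpha \pmod{I}$ for some $c_u \in F$.

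Let $f \in T_G(A)$ be multihomogeneous of multidegree $\alpha$ with $f = \sum_i \lambda_i u_i$. By the previous step, $f \equiv c_f\, m_\alpha \pmod{I}$ where $c_f := \sum_i \lambda_i c_{u_i} \in F$, so $c_f\, m_\alpha \in T_G(A)$. Here we invoke torsion-freeness: for every graded substitution $(a_1, \dots, a_n)$ we have $c_f \cdot m_\alpha(a_1, \dots, a_n) = 0$ in $A$, forcing $c_f = 0$ or $m_\alpha(a_1, \dots, a_n) = 0$. Since $c_f$ does not depend on the substitution, either $c_f = 0$ and $f \in I$, or $m_\alpha$ is itself a monomial (hence nilpotent) identity of $A$ and $f$ lies in $I$ plus the T-ideal generated by $m_\alpha$. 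Summing over multihomogeneous components, which are themselves identities by Proposition \ref{central}, completes the argument.

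The main obstacle is the normalization step: rigorously combining re-bracketing and swapping inside the non-associative groupoid $V[X_G]$, checking termination of the rewriting, and verifying that the final canonical form depends only on the multidegree rather than on the particular sequence of rewriting operations performed. Once this is established, the rest is formal, and the torsion-freeness hypothesis is exactly what is needed to clear the scalar $c_f$ and deduce the dichotomy between $f \in I$ and the production of a nilpotent identity.
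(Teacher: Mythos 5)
Your proposal follows the same route as the paper: reduce to multihomogeneous components via Proposition \ref{central}, normalize every monomial modulo the scheme $T$-ideal to a scalar multiple of the regular $r_1$-word determined by its multidegree, and then use torsion-freeness to conclude that either the scalar vanishes (so the polynomial lies in $I$) or the canonical monomial is itself a nilpotent identity. The one step you flag as the main obstacle is exactly what the paper's proof supplies, by a single induction on degree --- write $u = v_1 s_1$, normalize both factors into regular $r_1$-words, use both schemes to move the largest leaf to the far right, and recurse --- and note that confluence of the rewriting is not actually needed, since the target word is fixed by the multidegree and it suffices to exhibit one reduction of each monomial to it.
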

\begin{proof}
  Let $u$ by a monomial, $J$ the $T$-ideal generated by
  $xy-\mu(g(x),g(y))yx$ and $(xy)z-\nu(g(x),g(y),g(z))x(yz)$. We are
  now  going to show that $u\equiv\lambda w \pmod{J}$, where $w$ is a
  regular $r_1$-word and $\lambda\in F$, for any order we put on the
  variables. Which proves the lemma, by proposition \ref{central}.
  
  We shall do so by  induction on the degree of $u$. Every monomial
  of degree one is a regular $r_1$-word and the identity $xy-\mu
  (g(x),g(y))yx=0$ takes care of the degree two, this proves the
  initial case. Suppose that we have already proved the assertion for
  all words of degree less than $n$ ($n>2$), then it is true for all
  words of degree up to $n$, with effect:
  
  Let $u$, be a monomial of degree $n$ then $u=v_1 s_1$ for some $v_1$,
  $s_1$ monomials of lesser degree. By the induction hypothesis we
  have that $v_1 \equiv \lambda_1 v \pmod{J} $ and $s_1 \equiv
  \lambda_2 s \pmod{J}$, $v$, $s$ regular $r_1$-words. $v= v' x$ and
  $s= s' y$, where $x$ (resp. $y$) is the greatest element of $v$
  (resp. $s$), by definition. If $x> y$ we have that $\lambda_1 v' x
  \cdot \lambda_2 s' y \equiv \lambda_1 \lambda_2 \nu (g(v', g(x),
  g(s'y) ) \mu (g(x), g(s'y) ) (v' \cdot s' y) x \pmod{J}$, if $x \leq
  y$ we have that $\lambda_1 v' x \cdot \lambda_2 s' y \equiv
  \lambda_1 \lambda_2 \mu (g(v's), g(s'y) ) \nu (g(s'),$ $g(y), g(v'x)
  )$ $\mu (g(y), g(v'x) ) (s' \cdot v' x) y \pmod{J}$.
  
  In any case we have that $u \equiv \gamma lz \pmod{J}$, where
  $\gamma \in F$, $l$ is a monomial of degree $n-1$ and $z$ is the
  greatest element of $u$. Finally, by the induction hypothesis, $l
  \equiv \sigma w \pmod{J}$, where $\sigma \in F$ and $w$ is a regular
  $r_1$-word, Which proves the assertion and therefore the lemma.
\end{proof}

\begin{defin}[Composition Algebra]
  A function $n$, from the $F$-vector space $A$ to the field $F$, is
  called a \emph{quadratic form} if $n(\lambda x)=\lambda^2n(x)$ and
  $f(x,y):=n(x+y)-n(x)-n(y)$ is a bilinear form, $\lambda\in F,\
  x,y\in A$.  Furthermore if $A$ is an algebra then $A$ is said a
  \emph{composition algebra} if:
  \begin{itemize}
  \item $n(xy)=n(x)n(y)\ \forall x,y\in A$;
  \item the form $n$ is strictly non degenerate, i.e., f is non
    degenerated;
  \item $A$ is unitary.
  \end{itemize}
\end{defin}

Hurwitz was the first to obtain a classification of finite dimension
composition algebras for the case of the field of complex numbers in
\cite{comp1}, later Dickson gave another proof that carried over to
any algebraically closed field of characteristic not two in
\cite{comp2}, finally in \cite{comp3} Albert obtained a proof for any
field. Further Albert weakened the non degeneracy of $f$ and obtained
a new class of solutions when the field has characteristic two.

The first to study infinite dimensional composition algebras was
Kaplasky in \cite{kap}, and proved that it has to be finite
dimensional, if the non degeneracy of $f$ is weakened then the
composition algebra can also be a purely inseparable quadratic
extension of the field, being of characteristic two and the form
$f(x)=x^2$. Finally Jacobson in \cite{jake} study the automorphisms of
composition algebras and, beside other things, narrowed down the
isomorphisms classes of composition algebras.

For the reminder of this section we'll recall some results of those
articles. The treatment we use is the same one found in
\cite{4russos}.

\begin{notation}
  Denote $\bar{a}:=f(1,a)-a$, $t(a):=a+\bar{a}$ and $n(a):=
  a\bar{a}$.
\end{notation}

\begin{itemize}
\item Every composition algebra is alternative, that is, they satisfy
  the identity $(x,x,y)=(x,y,y)=0$ where $(x,y,z):=(xy)z-x(yz)$ is
  the associator;
\item The map $a\to\bar{a}$ is an involution which leaves the
  elements of $F$ fixed;
\item The elements $t(a)=a+\bar{a}$ and $n(a)=a\bar{a}$ lie in $F$;
\item Every composition algebra satisfy the equality $a^2-t(a)a+
  n(a)=0$.
\end{itemize}

\begin{defin}[Cayley-Dickson process]
  Let $A$ be an unitary $F$-algebra with an involution $a\to\bar{a}$,
  where $a+\bar{a},\ a\bar{a}\in F\ \forall a\in A$ and $\alpha\in
  F\setminus\{0\}$. We shall now construct a new algebra
  $(A,\alpha)$ which involution satisfying the same conditions of
  $A$, therefore we can apply the Cayley-Dickson process on 
  $(A,\alpha)$. Moreover it contains an isomorphic copy of $A$. 
  
  $(A,\alpha):=A\oplus A$ as vector spaces, $(a_1,a_2)(a_3,a_4)
  :=(a_1a_3+\alpha a_4\bar{a}_2,\bar{a}_1a_4+a_3a_2)$ as the
  multiplication and $\overline{(a_1,a_2)}:=(\bar{a}_1,-a_2)$ as the
  involution, clearly $(1,0)$ is the identity element of
  $(A,\alpha)$. We also denote $(1,0)$ and $(0,1)$ simply by $1$ and
  $v$ respectively, so $(a_1,a_2)$ is also denoted by $a_1+va_2$.
  
  If the quadratic form $n(a)=a\bar{a}$ is strictly non degenerate on
  $A$ then $n(x):=x\bar{x}$ in strictly non degenerate on
  $(A,\alpha)$. Moreover if $A$ is a composition algebra, then
  $(A,\alpha)$ is a composition algebra if and only if $A$ is
  associative. Finally if $A$ is $G$-graded (every algebra is graded
  by the trivial group), then $(A,\alpha)$ is
  $(G\times\mathbb{Z}_2)$-graded, as follow: $(A,\alpha)_{(h,0)}:=A_h$
  and $(A,\alpha)_{(h,1)}:=vA_h$.
\end{defin}

We now give four examples of composition algebras:
\begin{enumerate}
\item The field $F$ with $n(x)=x^2$ if char$F\neq2$, otherwise
  $f(x,y)\equiv0$.
\item $\mathbf{K}(\mu):=F\oplus Fv_1$ as vector spaces, $(a+bv_1)
  (c+dv_1):=ac+\mu bd+(ad+bc+bd)v_1$ as multiplication and
  $\overline{a+bv_1}=(a+b)-bv_1$, where $4\mu+1\neq0$. If char$F\neq2$
  then $\mathbf{K}(\mu)=F\oplus vF=(A,\alpha)$ where $v=v_1-2^{-1}$
  and $\alpha=\mu+4^{-1}\neq0$. Conversely, if char$F\neq2$ then
  $(A,\alpha)=F\oplus vF= \mathbf{K}(\mu)$, where $v_1=v+2^{-1}$ and
  $\mu=\alpha-4^{-1}$ also $4\mu+1\neq0$. 
\item $\mathbf{Q}(\mu,\beta):=(\mathbf{K}(\mu),\beta)$ with
  $\beta\neq0$, this is the \emph{algebra of generalized
    quaternions}. It's easy to see that $\mathbf{Q}(\mu,\beta)$ is
  associative but not commutative.
\item $\mathbf{C}(\mu,\beta,\gamma):=(\mathbf{Q}(\mu,\beta),\gamma)$
  with $\gamma\neq0$ is the \emph{Cayley-Dickson algebra} or simply
  the \emph{Octonions} and it is also denoted by $\mathbb{O}$. It's
  easy to see that the octonions are not  associative, therefore we
  cannot continue the Cayley-Dickson process to produce other
  composition algebras.
\end{enumerate}

\begin{lem}\label{Hurwitz lemma}
  Let $B$ be a subalgebra with $1$ of the composition algebra $A$
  and $a,b\in B,\ v\in B^{\perp}$. Then we have the following
  relations: 
  \begin{align*}
    \bar{v}=-v,\qquad& av=v\bar{a}; \tag{A}\\
    a\cdot vb=v\cdot\bar{a}b,\qquad& vb\cdot a=v\cdot ab;\tag{B}\\
    va\cdot vb=v^2\cdot b\bar{a}.& \tag{C}
  \end{align*}
\end{lem}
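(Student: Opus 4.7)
For (A), since $1\in B$ and $v\in B^\perp$, orthogonality gives $f(1,v)=v+\bar v=0$, hence $\bar v=-v$. Substituting this into $f(a,v)=a\bar v+v\bar a=0$ yields $av=v\bar a$.

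For (B), I would first derive the general identity
\[
a_1(\bar a_2 c)+a_2(\bar a_1 c)=f(a_1,a_2)\,c,
\]
valid in any alternative algebra whose involution $x\mapsto\bar x$ satisfies $x+\bar x\in F$. This follows by expanding $(a_1,\bar a_2,c)+(a_2,\bar a_1,c)$, which vanishes because the associator is skew-symmetric in its first two arguments (from linearizing $(x,x,y)=0$) and vanishes whenever an entry is an $F$-scalar; the remaining products combine as $(a_1\bar a_2+a_2\bar a_1)c=f(a_1,a_2)c$. Specializing $a_1=a$, $a_2=v$, $c=b$ and invoking $f(a,v)=0$ together with $\bar v=-v$ collapses the identity to $a\cdot vb=v\cdot\bar a b$, the first assertion of (B). For the second, associator antisymmetry gives $(v,a,b)+(b,a,v)=0$, i.e.,
\[
(va)b+(ba)v=v(ab)+b(av);
\]
rewriting $(ba)v=v(\bar a\bar b)$ and $b(av)=b(v\bar a)=v(\bar b\bar a)$ via (A) and the first part of (B), one obtains $(va)b=v\bigl[ab+\overline{ab}-\overline{ba}\bigr]=v\bigl[t(ab)-\overline{ba}\bigr]=v\cdot ba$, the last step using $t(ab)=t(ba)$, itself a consequence of $[\bar a,\bar b]=[a,b]$ (the traces being $F$-scalars). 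Relabeling gives $vb\cdot a=v\cdot ab$.

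For (C), note that $t(v)=f(1,v)=0$ and the quadratic relation $x^2-t(x)x+n(x)=0$ force $v^2=-n(v)\in F$, so $v^2$ is central. Expand
\[
(va)(vb)=((va)v)b-(va,v,b).
\]
Flexibility $(v,a,v)=0$ and (A) give $(va)v=v(av)=v^2\bar a$, whence $((va)v)b=v^2\bar a b$. Associator antisymmetry yields $(va,v,b)=-(v,va,b)$, and the left alternative law $(v,v,x)=0$ (so $v(va)=v^2a$ and $v(v(ba))=v^2(ba)$) combined with the second part of (B) ($(va)b=v(ba)$) gives $(v,va,b)=v^2(ab-ba)$. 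Collecting,
\[
(va)(vb)=v^2\bar a b+v^2(ab-ba)=v^2\bigl(t(a)b-ba\bigr)=v^2\,b\bar a.
\]

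The main obstacle is the non-associative bookkeeping in (B) and (C): one must track associator antisymmetry carefully and repeatedly invoke (A) to slide $v$ past $B$-elements, while exploiting the centrality of $t(a)$ and $v^2$ to collapse expressions. Once these techniques are set up, the rest of the argument is mechanical.
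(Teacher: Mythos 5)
Your argument is correct. Note first that the paper itself gives no proof of this lemma: it is recalled verbatim from the literature (the treatment of \cite{4russos}), so there is no in-paper argument to compare against. The standard proof in that reference runs through the linearized composition law $f(xy,zw)+f(zy,xw)=f(x,z)f(y,w)$, the adjointness relations $f(ax,y)=f(x,\bar a y)$, and the \emph{strict nondegeneracy} of $f$ (one shows $f(a\cdot vb - v\cdot\bar a b,\,z)=0$ for all $z$ and concludes). You instead work entirely inside the algebra, using only the facts the paper lists: alternativity (hence the alternating associator), $\bar x = t(x)-x$ with $t(x)\in F$, $x\bar x=n(x)\in F$, and the quadratic relation. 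Each step checks out: the linearization $x\bar y+y\bar x=f(x,y)$ of $x\bar x=n(x)$ justifies both halves of (A) and the key identity $a_1(\bar a_2c)+a_2(\bar a_1c)=f(a_1,a_2)c$ (the associator computation $(a_1,\bar a_2,c)+(a_2,\bar a_1,c)=-\bigl((a_1,a_2,c)+(a_2,a_1,c)\bigr)=0$ is right); the identity $t(ab)=t(ba)$ via $[\bar a,\bar b]=[a,b]$ correctly closes the second half of (B); and in (C) the signs in $(va)(vb)=((va)v)b+(v,va,b)$ and the reduction $t(a)b-ba=b\bar a$ are all correct. Your route buys something concrete: it never invokes nondegeneracy of the form, only alternativity and the quadratic structure, so it is somewhat more elementary and more self-contained relative to what the paper actually states as known. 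The only mild caveat is that you should say explicitly that $\bar a\in B$ (needed when you apply the first half of (B) to $b\cdot v\bar a$), which holds because $B$ contains $1$ and $a$ and $\bar a=t(a)-a$.
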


\begin{theo}[Generalized Hurwitz]
  Let $A$ be a composition algebra. Then $A$ is isomorphic to one of
  the four mentioned composition algebras above.
\end{theo}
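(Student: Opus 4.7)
The plan is to proceed by induction on $\dim A$, extending a composition subalgebra $B\subseteq A$ one Cayley-Dickson doubling at a time until it exhausts $A$. The base case is $B=F\cdot 1$ when $\mathrm{char}\,F\neq 2$ (or, in characteristic two, a carefully chosen two-dimensional $\mathbf{K}(\mu)\subseteq A$, since $F\cdot 1$ is totally isotropic for $f$ in that setting). If already $B=A$ we are done, so the heart of the argument is the inductive enlargement step.

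Given a proper composition subalgebra $B\subseteq A$ with $1\in B$, strict non-degeneracy of $f$ together with non-degeneracy of $f|_B$ (coming from $B$ itself being a composition algebra) yields a decomposition $A=B\oplus B^{\perp}$ and hence some $v\in B^\perp$ with $n(v)\neq 0$. Put $\alpha:=v^{2}=-n(v)\in F^{\times}$; the relation $v\perp 1$ forces $t(v)=0$ and $\bar{v}=-v$, so rules (A), (B), (C) of Lemma \ref{Hurwitz lemma} apply. The key claim of the step is that $B+vB$ is a subalgebra of $A$ of dimension $2\dim B$, and that the linear map $(a,b)\mapsto a+vb$ is an algebra isomorphism $(B,\alpha)\to B+vB$. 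Directness of the sum follows from the identity $f(vb,c)=f(v,c\bar{b})=0$ for $b,c\in B$ (which places $vB\subseteq B^{\perp}$), combined with invertibility of $v$ in the alternative algebra $A$. For the multiplication, expanding
\[(a_1+vb_1)(a_2+vb_2)=a_1a_2+a_1\cdot vb_2+vb_1\cdot a_2+vb_1\cdot vb_2\]
and applying the three rules of the lemma yields $(a_1a_2+\alpha b_2\bar{b}_1)+v(\bar{a}_1b_2+a_2b_1)$, which is exactly the Cayley-Dickson product in $(B,\alpha)$.

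Iterating produces a chain of composition subalgebras of $A$ of dimensions $1,2,4,8$, isomorphic respectively to $F$, $\mathbf{K}(\mu)$, $\mathbf{Q}(\mu,\beta)$, and $\mathbf{C}(\mu,\beta,\gamma)$. The chain must terminate at the octonion stage: by the remark following the Cayley-Dickson definition, the subalgebra $\mathbf{C}+v\mathbf{C}$ of $A$ inherits a composition algebra structure, but $(\mathbf{C},\alpha)$ is a composition algebra only when $\mathbf{C}$ is associative, which it is not. Hence at some step $B=A$, giving one of the four algebras in the statement.

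The step I expect to be the main obstacle is the verification that $B+vB$ reproduces the Cayley-Dickson multiplication, which relies on applying Lemma \ref{Hurwitz lemma} carefully and on showing $vB\cap B=0$; the characteristic-two base case is a secondary subtlety, handled once one exhibits a two-dimensional subalgebra of $A$ on which $n$ is non-degenerate.
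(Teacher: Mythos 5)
The paper does not actually prove this theorem---it quotes it from the classical literature and follows the treatment of \cite{4russos}---but your doubling argument is precisely the standard proof given there, the one that Lemma \ref{Hurwitz lemma} is stated in order to support: anisotropic $v\in B^{\perp}$, relations (A)--(C) turning $B+vB$ into $(B,\alpha)$, and termination at dimension $8$ because $(\mathbf{C},\gamma)$ cannot compose; this is correct. The steps you leave implicit are all routine (existence of $v\in B^{\perp}$ with $n(v)\neq 0$ via non-degeneracy of $f$, the linearized law $f(vb,c)=f(v,c\bar{b})$ for directness, and the characteristic-two base case, which you flag but do not carry out).
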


\begin{lem}
  For a composition algebra $A$ the following conditions are equivalent:
  \begin{itemize}
  \item $n(x)=0$ for some $0\neq x\in A$;
  \item there are zero divisors in $A$;
  \item $A$ contains an idempotent $e\neq0,1$.
  \end{itemize}
  
  Such a composition algebra is said \emph{split}. 
\end{lem}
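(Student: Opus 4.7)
The plan is to prove the cyclic chain $(2)\Rightarrow(1)\Rightarrow(3)\Rightarrow(2)$.

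The two short links frame the argument. For $(2)\Rightarrow(1)$: if $xy=0$ with $x,y\neq 0$, multiplicativity of the norm yields $n(x)n(y)=n(xy)=0$, so at least one of the nonzero factors is isotropic. For $(3)\Rightarrow(2)$: if $e$ is an idempotent with $e\neq 0,1$, then both $e$ and $1-e$ are nonzero and $e(1-e)=e-e^2=0$ produces a pair of zero divisors.

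The substantive step is $(1)\Rightarrow(3)$, which I would prove by manufacturing an explicit idempotent from a given $0\neq x$ with $n(x)=0$. The starting point is the quadratic relation $a^2-t(a)a+n(a)=0$ valid for every $a\in A$. I would split on whether $t(x)$ vanishes. If $t(x)\neq 0$, then $x^2=t(x)x$, so $e:=t(x)^{-1}x$ squares to itself; here $e\neq 0$ is immediate and $e\neq 1$ follows from $n(e)=t(x)^{-2}n(x)=0\neq 1=n(1)$. If $t(x)=0$, the relation collapses to $x^2=0$ and $x$ alone can yield no idempotent; this is where I would invoke strict non-degeneracy of the norm (that is, non-degeneracy of $f$) to pick $y\in A$ with $f(x,y)=1$. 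Expanding $n(x+y)=(x+y)\overline{(x+y)}$ gives the scalar identity $x\bar y+y\bar x=f(x,y)\cdot 1=1$, and then $e:=x\bar y$ satisfies $t(e)=x\bar y+\overline{x\bar y}=x\bar y+y\bar x=1$ together with $n(e)=n(x)n(\bar y)=0$; the quadratic relation forces $e^2=e$, while $t(e)=1$ and $n(e)=0$ rule out $e\in\{0,1\}$.

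The main obstacle is the $t(x)=0$ branch: the element $x$ is nilpotent and carries no direct idempotent information, so one must build a companion $y$. It is precisely at that step that the full hypothesis of \emph{strict} non-degeneracy (non-degeneracy of $f$, not merely of $n$) is essential, which is why the standard Hurwitz framework is needed rather than just the weaker norm axioms.
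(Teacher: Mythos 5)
Your proposal is correct, but note that the paper itself supplies no proof of this lemma: it is quoted as a classical fact recalled from the literature (the treatment in \cite{4russos}), so there is no in-paper argument to compare against. Judged on its own, your cyclic chain $(2)\Rightarrow(1)\Rightarrow(3)\Rightarrow(2)$ is sound. The two short links are immediate from multiplicativity of $n$ and from $e(1-e)=e-e^2=0$. The substantive step $(1)\Rightarrow(3)$ is also complete: in the $t(x)\neq 0$ branch the quadratic relation $x^2=t(x)x$ directly normalizes to an idempotent, and in the $t(x)=0$ branch your element $e=x\bar{y}$ with $f(x,y)=1$ works because $f(x,y)=x\bar{y}+y\bar{x}$ (so $t(e)=1$), $n(e)=n(x)n(\bar{y})=0$, and the quadratic relation then forces $e^2=t(e)e-n(e)=e$, with $t(e)=1$ and $n(e)=0$ excluding $e=0$ and $e=1$ respectively. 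All of this is characteristic-free and uses exactly the axioms the paper lists (the involution, $t(a),n(a)\in F$, the quadratic relation, multiplicativity, and strict non-degeneracy of $f$). One small streamlining: the case split is not logically necessary, since the construction $e=x\bar{y}$ with $f(x,y)=1$ applies verbatim whenever $n(x)=0$ and $x\neq 0$, non-degeneracy always supplying such a $y$; the $t(x)\neq 0$ branch is just a shortcut that avoids invoking non-degeneracy. Your closing remark is also accurate: it is precisely the degenerate-$f$ situations (e.g.\ the characteristic-two inseparable extensions mentioned after Kaplansky's theorem) where a nonzero isotropic vector need not produce an idempotent, so strict non-degeneracy is genuinely the load-bearing hypothesis.
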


\begin{theo}\label{split}
  Any two split composition algebra of the same dimension over a field
  $F$ are isomorphic. Furthermore every composition algebra over an
  algebraically closed field is split.
\end{theo}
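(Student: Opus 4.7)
\medskip
\noindent
\textbf{Proof plan.} The theorem has two distinct assertions, which I would handle by separate strategies.

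For the second assertion, the key observation is that over an algebraically closed field any nondegenerate quadratic form in at least two variables is isotropic. Applied to $n$ on a composition algebra $A$ of dimension $\geq 2$: picking two linearly independent $e_1, e_2 \in A$, the binary form $n(xe_1 + ye_2) \in F[x, y]$ is homogeneous of degree two and therefore factors nontrivially over the algebraically closed $F$, producing a nonzero vector of norm zero. By the preceding lemma, $A$ is split. (The one-dimensional case $A = F$ with $n(x) = x^2$ is never split over any field and is tacitly excluded from the statement.)

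For the first assertion I would induct on the dimension $2^k$, producing a canonical split algebra $B_k$ in each dimension via the Cayley-Dickson process. The base case $k = 1$ is immediate: a split $\mathbf{K}(\mu)$ contains a nontrivial idempotent $e$, and $e, 1-e$ form an orthogonal pair of idempotents summing to $1$, yielding $\mathbf{K}(\mu) \cong F \oplus F$ regardless of $\mu$. For the inductive step, let $A$ be split of dimension $2^{k+1}$ and write $A = (B, \alpha)$ via Cayley-Dickson. The plan is first to replace $B$ by a split subalgebra $B' \subset A$ of the same dimension, containing $1$, with nondegenerate restricted norm, so that $A \cong (B', \alpha')$ for some $\alpha' \in F^*$. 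By the inductive hypothesis $B' \cong B_k$; being split, its norm is universal (it contains a hyperbolic plane, which represents every element of $F$), so $\alpha' = n(b)$ for some invertible $b \in B'$. Then $v' := vb^{-1}$ satisfies $(v')^2 = v^2 \cdot n(b^{-1}) = \alpha' n(b)^{-1} = 1$ by Lemma \ref{Hurwitz lemma}(C), realizing $A \cong (B_k, 1)$, which we may define to be $B_{k+1}$.

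The principal obstacle is the replacement step that produces the split subalgebra $B' \subset A$ of half dimension. When the chosen $B$ in $A = (B, \alpha)$ is itself not split, a nonzero isotropic vector $b_1 + vb_2 \in A$ must lie outside $B$, and its norm equation $n_B(b_1) = \alpha n_B(b_2)$ forces $\alpha \in n_B(B^*)$; a suitable change of generator then yields a new element of $A$ squaring to $1$, from which one extracts a nontrivial idempotent and a split $2$-dimensional subalgebra $F\oplus F \subset A$. Enlarging this to a split subalgebra of dimension $2^k$ will require successively adjoining elements from the orthogonal complement, verifying at each step via Lemma \ref{Hurwitz lemma} that the resulting subspace is closed under multiplication and that the restricted norm remains nondegenerate; keeping the Cayley-Dickson relations consistent throughout is the technical heart of the proof.
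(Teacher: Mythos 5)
The paper does not actually prove this theorem: it is quoted as background from the literature (Albert, Jacobson, and the treatment in \cite{4russos}), so there is no in-paper argument to compare against. Your plan is essentially the classical proof from those sources, and it is sound: the isotropy argument for the second assertion is correct in every characteristic (and you are right that the one-dimensional algebra $F$ must be excluded), and the first assertion is handled the standard way, by growing a split subalgebra of half the dimension and renormalizing the doubling generator $v\mapsto vb^{-1}$ using Lemma \ref{Hurwitz lemma}(C) and the universality of the hyperbolic norm on a split algebra. Two small points to tighten. First, your detour through ``an element squaring to $1$, from which one extracts a nontrivial idempotent'' silently divides by $2$ (the idempotent is $(1+w)/2$) and so fails in characteristic two; it is also unnecessary, since the preceding lemma already hands you a nontrivial idempotent $e$ in any split $A$, and $t(e)=1$, $n(e)=0$ show directly that $F1+Fe\cong F\oplus F$ carries a hyperbolic (hence nondegenerate) norm. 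Second, at each doubling step you need an anisotropic vector in $C^{\perp}$; this should be justified by noting that $f$ restricted to $C^{\perp}$ is nondegenerate (because $f$ is nondegenerate on $A$ and on $C$), so $n$ cannot vanish identically there. With those repairs the plan goes through and agrees with the cited proof.
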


\section{Some Identities}
Our goal here is to encounter all the $\mathbb{Z}_2^2$-graded
identities, here the grading is given by the Cayley-Dickson
process. For that we first look at the $\mathbb{Z}_2^3$-graded
identities (obviously for that the field cannot have characteristic
two).  There are two great things about the $\mathbb{Z}_2^3$ grading,
first all the non zero $\mathbb{Z}_2^3$ homogeneous elements are
invertible, second soon we'll know all it's $\mathbb{Z}_2^3$-graded
identities.

We can digest a good part of lemma \ref{Hurwitz lemma} relations into
graded identities. We first note that $g(v)\notin H:=\langle g(a),g(b)
\rangle$ imply that $1\in B=\bigoplus_{h\in H}\mathbb{O}_h$ and
$v\in B^{\perp}$, $B$ is clearly a subalgebra. With that we'll slash
\ref{Hurwitz lemma} hypotheses. There  is still the involution, but it
can be overcame in virtue of (A), as follow:

\begin{prop}\label{z2a3}
  Let $F$ be an infinite field whose characteristic is not two. Then
  $T_{\mathbb{Z}_2^3}\mathbb{O}$ is generated by:
  \begin{align}
    [x_1,x_2]=0, \qquad& \vert\langle g(x_1),g(x_2)\rangle\vert
    \leq2; \label{comutativa} \\
    x_1\circ x_2=0, \qquad& \vert\langle g(x_1),g(x_2)\rangle\vert
    \geq4; \label{anti-comutativa} \\
    (x_1,x_2,x_3)=0, \qquad& \vert\langle g(x_1),g(x_2),g(x_3)\rangle
    \vert\leq4; \label{associativa} \\
    (x_1x_2)x_3+x_1(x_2x_3)=0, \qquad&
    \langle g(x_1), g(x_2), g(x_3) \rangle =
    \mathbb{Z}_2^3. \label{anti-associativa}
  \end{align}
\end{prop}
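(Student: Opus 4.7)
The plan is to verify that the four identities hold in $\mathbb{O}$ and then apply Lemma \ref{myfirstbaby}, finishing by excluding nilpotent identities.

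For \eqref{comutativa} and \eqref{associativa}, the hypothesis on the subgroup generated by the grades means all variables lie in a composition subalgebra of dimension at most $2$ or $4$; such an algebra is $F$, a quadratic extension, or a quaternion subalgebra, all of which are commutative (resp.\ associative). For \eqref{anti-comutativa}, the hypothesis forces $g(x_1)$ and $g(x_2)$ distinct and nonzero, and relation (A) of Lemma \ref{Hurwitz lemma} gives $x_1 x_2 = x_2 \bar x_1$; since $\bar x_1 = -x_1$ on $\mathbb{O}_g$ for $g\neq 0$ in characteristic not two (a direct consequence of the iterated Cayley-Dickson involution), this reduces to $-x_2 x_1$. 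For \eqref{anti-associativa}, the grades $g(x_1), g(x_2), g(x_3)$ are nonzero, pairwise distinct, and $g(x_3)\notin\langle g(x_1),g(x_2)\rangle$; letting $B$ be the quaternion subalgebra indexed by $\langle g(x_1),g(x_2)\rangle$ and writing $x_3=vc$ with $v\in B^\perp$ and $c\in B$, a few applications of relation (B) together with the associativity of $B$ reduce $(x_1 x_2) x_3 + x_1 (x_2 x_3)$ to $v\bigl((x_1 x_2 + x_2 x_1)c\bigr)$, which vanishes by \eqref{anti-comutativa} applied inside $B$.

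Next, I define $\mu(g_1,g_2):=+1$ if $|\langle g_1,g_2\rangle|\leq 2$ and $-1$ otherwise, and $\nu(g_1,g_2,g_3):=+1$ if $|\langle g_1,g_2,g_3\rangle|\leq 4$ and $-1$ otherwise. The scheme identities associated to this $\mu$ and $\nu$ in Lemma \ref{myfirstbaby} are exactly \eqref{comutativa}--\eqref{anti-associativa}. Hence $T_{\mathbb{Z}_2^3}(\mathbb{O})$ is generated by these four identities together with possibly some nilpotent identities.

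To exclude nilpotent identities, the decisive feature is that each $\mathbb{Z}_2^3$-graded component of $\mathbb{O}$ is one-dimensional and every nonzero homogeneous element is invertible (having norm equal to a nonzero product of the Cayley-Dickson parameters $\mu,\beta,\gamma$). By Proposition \ref{central} it suffices to treat a multihomogeneous $f\in T_{\mathbb{Z}_2^3}(\mathbb{O})$; the reduction in the proof of Lemma \ref{myfirstbaby} writes $f\equiv\lambda w\pmod{J}$ for a unique regular $r_1$-word $w$ of the prescribed multidegree and some $\lambda\in F$. Evaluating $w$ on the standard basis elements of the given grades produces a product of invertible homogeneous elements, hence a nonzero element of $\mathbb{O}$, forcing $\lambda=0$. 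I expect the hardest part to be the \eqref{anti-associativa} verification, which requires carefully chaining relations (A)--(B) and using that the conjugates $\bar x_i$ simplify to $-x_i$ in order to bootstrap from \eqref{anti-comutativa}.
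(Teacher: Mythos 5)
Your proposal is correct and follows essentially the same route as the paper: verify the four identities via the relations of Lemma \ref{Hurwitz lemma}, invoke Lemma \ref{myfirstbaby} with the signs $\mu,\nu=\pm1$ determined by the subgroup orders, and rule out nilpotent identities because every nonzero $\mathbb{Z}_2^3$-homogeneous element is invertible. The only difference is that you spell out the verifications and the invertibility argument in detail where the paper leaves them implicit.
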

\begin{proof}
  By lemma \ref{Hurwitz lemma} we have that $\mathbb{O}$ satisfies the
  above identities therefore it's under the conditions of the
  proposition \ref{myfirstbaby} for the $\mathbb{Z}_2^3$
  grading. Furthermore it cannot have any nilpotent identity, since
  every homogeneous element is invertible, which proves the
  proposition.
\end{proof}

\begin{cor}
  Let $D$ be an infinite domain whose characteristic is not two and
  form the ``Cayley-Dickson'' algebra over $D$, $\mathbb{O}$. Then
  $T_{\mathbb{Z}_2^3}\mathbb{O}$ is generated by identities
  \eqref{comutativa}-\eqref{anti-associativa} .
\end{cor}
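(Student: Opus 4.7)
The plan is to reduce the corollary to the field case that was just established in Proposition \ref{z2a3} by extending scalars to the field of fractions of $D$. Let $K$ denote this field of fractions; it is an infinite field of characteristic distinct from two. The Cayley-Dickson algebra $\mathbb{O}$ over $D$ is free of rank $8$ as a $D$-module, so the hypotheses of Proposition \ref{central} are satisfied and we obtain $T_{\mathbb{Z}_2^3}(\mathbb{O}) = T_{\mathbb{Z}_2^3}(\mathbb{O} \otimes_D K)$.

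Next I would observe that the Cayley-Dickson process commutes with scalar extension: each of the doublings $D \to \mathbf{K}(\mu) \to \mathbf{Q}(\mu,\beta) \to \mathbf{C}(\mu,\beta,\gamma)$ is defined by structure constants in the ground ring, and the eight homogeneous components of the $\mathbb{Z}_2^3$-grading are spanned by fixed basis vectors (the standard ``octonion units'') independent of the scalars. Consequently $\mathbb{O} \otimes_D K$ is isomorphic as a $\mathbb{Z}_2^3$-graded $K$-algebra to the Cayley-Dickson algebra over $K$. Proposition \ref{z2a3} then delivers that $T_{\mathbb{Z}_2^3}(\mathbb{O} \otimes_D K)$ is generated, as a $K$-T-ideal, by identities \eqref{comutativa}--\eqref{anti-associativa}, and combining this with the previous identification yields the corollary.

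The subtle point, which I expect to be the main obstacle, is interpreting the equality ``$T_G(A) = T_G(A \otimes K)$ as algebras over $F$'' from Proposition \ref{central} at the level of T-ideal generators. The four identities have coefficients in $\{-1,0,1\}$ and hence lie in $D_G\{X\}$, so the $D$-T-ideal they generate is automatically contained in $T_{\mathbb{Z}_2^3}(\mathbb{O})$. For the reverse inclusion, any $f \in T_{\mathbb{Z}_2^3}(\mathbb{O})$ is multihomogeneous by Proposition \ref{central} and, viewed in $K_G\{X\}$, is a $K$-T-ideal consequence of the four identities; a standard denominator-clearing argument using the torsion-freeness of $D$ then rewrites $f$ as a $D$-T-ideal consequence of the same four identities, completing the proof.
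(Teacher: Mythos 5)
Your argument is correct and follows exactly the route the paper takes: the paper's entire proof is the phrase ``a direct application of Proposition~\ref{central}'', i.e.\ extend scalars to the fraction field $K$ of $D$, use freeness of $\mathbb{O}$ as a $D$-module to get $T_{\mathbb{Z}_2^3}(\mathbb{O})=T_{\mathbb{Z}_2^3}(\mathbb{O}\otimes_D K)$, and invoke Proposition~\ref{z2a3} over $K$. You have simply made explicit the details the paper leaves implicit (that the Cayley--Dickson construction and its grading commute with scalar extension, and the denominator-clearing needed to pass from $K$-T-ideal generation back to $D$-T-ideal generation), which is a welcome but not essentially different elaboration.
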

\begin{proof}
  A direct application of \ref{central}.
\end{proof}

Now we will enter the $\mathbb{Z}_2^2$ realm. The identities bellow
are obtained in the same way that we used to obtain the
$\mathbb{Z}_2^3$ identities.

\begin{align}
  ab\cdot v=v\cdot ba, & \qquad
  g(v)\neq0\neq g(a)=g(b);\label{cocomuta}\\
  (ax\cdot b)v=v(ba\cdot x), & \qquad
  g(v)\neq0=g(x)\neq g(a)=g(b);\label{cocomuta2}\\
  v(ax\cdot b)=(ba\cdot x)v, & \qquad
  g(v)\neq0=g(x)\neq g(a)=g(b);\label{cocomuta3}\\
  x\circ y=0 & \qquad
  \langle g(x),g(y)\rangle=\mathbb{Z}_2^2; \label{antcom}\\
  vb\cdot a=v\cdot ab, & \qquad
  g(v)\notin\langle g(a),g(b)\rangle;\label{entdir}\\
  a\cdot vb=v\cdot ba, & \qquad
  \langle g(v),g(b)\rangle=\mathbb{Z}_2^2,\
  g(a)=0; \label{entesq0?}\\
  va\cdot w+wa\cdot v=-(v\circ w)a, & \qquad
  g(v),g(w)\notin\langle\ g(a)\rangle\neq(0); \label{lokao2}\\
  va\cdot wb+wa\cdot vb=-(v\circ w)ba, & \qquad
  g(v),g(w)\notin\langle\ g(a)(\neq0),g(b)\rangle. \label{idnova}
\end{align}
Beside those we also have:
\begin{align}
  (x,y,z)=0, & \qquad
  \vert\langle g(x),g(y),g(z)\rangle\vert\leq2; \label{ass}\\
  [x,y]=0, & \qquad
  g(x)=g(y)=0; \label{com}\\
  (x,x,y)=(x,y,y)=0; & \qquad
  \label{alt}\\
  v\cdot wb+w\cdot vb=(v\circ w)b, & \qquad
  g(v),g(w)\notin\langle g(b)\rangle.\label{lokao}
\end{align}

Now let $I$ be the $T_{\mathbb{Z}_2^2}$-ideal generated by
\eqref{cocomuta}-\eqref{com}, it's easy to see that \eqref{alt} and
\eqref{lokao} are consequences of \eqref{cocomuta}-\eqref{com}. Our
goal is to prove that $I= T_{\mathbb{Z}_2^2} (\mathbb{O})$. Here on
forward we will simply say that $a$ is equivalent to $b$ or $a \equiv
b$ instead of $a$ is equivalent to $b$ modulo $I$ or $a \equiv b
\pmod{I}$.

The basic idea of the proof is to assume, by contradiction, that
$I \neq T_{\mathbb{Z}_2^2}$, then there is a $f \in T_{\mathbb{Z}_2^2}
(\mathbb{O}) \setminus I$ of minimal degree. Following with an
appropriate substitutions slice up $f$ in several identities $f_i$,
each being a consequence of some identity $g_i$ of lesser degree,
therefore in $I$ contradicting that $f \notin I$.

The first step is to reduce every monomial to a normal form, in virtue
of Shirshov's Theorem, it's enough to consider $u$ a regular
$r_2$-word ($x_n^\nu< x_m^\mu$ if $\nu< \mu$ or $\nu= \mu$, $n<m$, for
now we will only say that $0$ is the greatest element of
$\mathbb{Z}_2^2$). Strictly speaking we don't need to use Shirshov's
Theorem, however it will save us half the work, so we'll gladly use it.

It is worth noting that all the identities that generate $I$ are
multilinear therefore $I$ is multihomogeneous and the equivalence
preserves multidegree.

\section{The Zero Component Variables}

\begin{lem}\label{lem0}
  Let $u$ be a regular $r_2$-word and $x$ the greatest element that
  $u$ depends on, suppose that $g(x)=0$. Then we have the following
  possibilities:
  \begin{itemize}
  \item $u\equiv\pm yx$;
  \item $u\equiv\pm xy$, $g(u)\neq0$;
  \item $u\equiv\pm yx\cdot z$, $g(y)=g(z)\neq0$;
  \end{itemize}
  where $y,z$ are monomials.
\end{lem}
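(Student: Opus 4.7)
The plan is induction on the length of $u$, with a case analysis driven by the top-level decomposition. Write $u = A \cdot B$, where $B = u_n$ is the last regular $r_1$-word in the $r_2$-decomposition of $u$ and $A = \langle u_1, \ldots, u_{n-1}\rangle$. Since each $u_i$ is regular and $x$ is greatest in $u$, the variable $x$ sits at the end of any $u_i$ that contains it. The analysis then splits on whether $x$ appears in $B$ or only in $A$.

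If $x$ lies in $B$, then $B$ ends in $x$, say $B = \beta x$ with $\beta$ possibly empty. The empty case is immediately form 1. Otherwise I split on the grades of $A$ and $\beta$. When $g(A) \in \{0, g(\beta)\}$, identity \eqref{ass} forces the associator $(A, \beta, x) = 0$, yielding $u \equiv (A\beta) x$, form 1. When instead $\langle g(A), g(\beta)\rangle = \mathbb{Z}_2^2$, identity \eqref{entesq0?} (with $a = x$, $v = A$, $b = \beta$) gives $u \equiv x (A \beta)$, form 2, since $g(u) = g(A) g(\beta)$ is non-zero.

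If $x$ lies only in $A$, then the induction hypothesis applied to the shorter regular $r_2$-word $A$ (in which $x$ is still the greatest variable) yields $A \equiv \pm y x$, $\pm xy$, or $\pm (yx) z$ with $g(y) = g(z) \neq 0$. I would then multiply by $B$ and reduce. The main tools are \eqref{ass} and \eqref{com} when the grades collapse to at most two distinct elements of $\mathbb{Z}_2^2$; \eqref{entdir} and \eqref{entesq0?} when one has a zero-grade variable together with a pair generating $\mathbb{Z}_2^2$ (for instance, for $A \equiv yx$ with $\langle g(y), g(B)\rangle = \mathbb{Z}_2^2$, one chains $(yx)B = y(Bx) = x(yB)$ using these two identities); and, for cross terms coming from the form $A \equiv xy$, the flexibility relation $(a, b, c) + (b, a, c) = 0$ deduced from \eqref{alt}, which lets one rewrite $(xy) B$ in terms of the already handled $(yx) B$. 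Form 3 appears exactly when the grades of the multiplicands conspire to satisfy its grade requirement (e.g. $A \equiv yx$ with $g(y) = g(B) \neq 0$, in which case one takes $z = B$; or $A \equiv (yx) z$ with $g(B) = 0$, taking $z' = zB$).

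The hard part will be the sub-case $A \equiv (yx) z$ combined with $\langle g(y), g(B)\rangle = \mathbb{Z}_2^2$, because then the outer associator $((yx)z, \cdot, B)$ is not annihilated by \eqref{ass} and no single identity directly rewrites $((yx)z) B$. Pushing it into normal form will require a careful chain of \eqref{entdir}, \eqref{entesq0?}, \eqref{lokao} and \eqref{idnova} together with the alternative-algebra flexibility, and verifying that this chain terminates in one of the three allowed patterns (I expect form 2, since $g(u) = g(B) \neq 0$ in that configuration).
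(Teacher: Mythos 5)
Your skeleton is the paper's: induction on the number of regular $r_1$-factors of $u$, a split on whether $x$ sits in the last factor $u_n$, and a grade-by-grade case analysis using the generators of $I$. The half where $x$ lies in $B=u_n$ is essentially the paper's argument and is correct, up to one small slip: your dichotomy ``$g(A)\in\{0,g(\beta)\}$'' versus ``$\langle g(A),g(\beta)\rangle=\mathbb{Z}_2^2$'' omits the sub-case $g(\beta)=0\neq g(A)$, which still falls to \eqref{ass} since $|\langle g(A),g(\beta),g(x)\rangle|\leq 2$ there as well.

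The genuine gap is exactly the case you flag as hard and defer: $u\equiv(yx\cdot z)B$ with $g(y)=g(z)\neq0$ and $g(B)\neq0$. Your proposed toolkit for it --- \eqref{entdir}, \eqref{entesq0?}, \eqref{lokao}, \eqref{idnova} and flexibility --- omits the one identity that was put into the generating set precisely for this configuration, namely \eqref{cocomuta2} (together with its twin \eqref{cocomuta3}): one step gives $(yx\cdot z)B\equiv B(zy\cdot x)$, and then \eqref{ass} (the grades involved are $g(B)$, $0$, $0$) gives $(B\cdot zy)x$, which is form 1, not the form 2 you predict. Without the cocomuta family you are stuck, because the left factor $(yx)z$ has grade $0$ but is not a single variable: \eqref{antcom} needs the two grades to generate $\mathbb{Z}_2^2$, \eqref{com} needs both grades to be zero, and \eqref{entdir}, \eqref{entesq0?} require the zero-grade element to be a lone multiplier, so none of them moves $(yx)z$ past $B$. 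The chain you promise is therefore not shown to exist, and since this is the case that produces form 3 inputs for the next induction step, the lemma is not proved without it. (A secondary, fixable weakness: rewriting $(xy)B$ via left-alternativity introduces the extra terms $x(yB)$ and $y(xB)$, each needing its own normalization; the paper instead treats $xy\cdot z$ directly in each grade case with \eqref{ass} or with \eqref{antcom} followed by \eqref{entdir}.)
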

\begin{proof}
  We have that $u=(\ldots ((u_1u_2)u_3)\dots u_{n-1})u_n$ where each
  $u_i$ is a regular $r_1$-word. We'll prove the lemma by induction
  on $n$. The initial case is exactly $yx$. If $x$ appears on $u_n$,
  $n\neq1$ then we need only to agglutinate what's to the left of
  $x$, that is, we have $z\cdot yx$ and want to obtain $wx$ or $xw$
  (just to make things crystal clear, $z=(\ldots ((u_1u_2)u_3)\dots
  u_{n-2})u_{n-1}$ and $u_n=yx$). This part of the proof (as many
  more to come) is divided into cases. Each case is one, or more,
  possibilities of the homogeneous component of each variable.
  \begin{itemize}
  \item $\langle g(z),g(y)\rangle\neq\mathbb{Z}_2^2$\\
    $z\cdot yx\equiv_{\eqref{ass}} zy\cdot x$,
  \item $\langle g(z),g(y)\rangle=\mathbb{Z}_2^2$\\
    $z\cdot yx\equiv_{\eqref{entesq0?}} x\cdot zy$.
  \end{itemize}
  
  Now if $x$ doesn't appear in $u_n$ we have, by the induction
  hypothesis, that $u\equiv\pm yx\cdot z$, $u\equiv\pm xy\cdot z$ or
  $(tx\cdot w)z$, where $g(t)=g(w)\neq0$. We are going to divide
  those three cases into sub cases. Each sub case is one, or more,
  possibilities of the homogeneous component of each variable. We
  begin which the first two cases:
  \begin{itemize}
  \item $\langle g(z),g(y)\rangle=\mathbb{Z}_2^2$\\
    1.\ $xy\cdot z\equiv_{\eqref{antcom}}-z\cdot xy
    \equiv_{\eqref{entdir}}-zy\cdot x$,\quad
    2.\ $yx\cdot z\equiv_{\eqref{antcom}}-z\cdot yx
    \equiv_{\eqref{entesq0?}}-x\cdot zy$;
  \item $g(y)=0$\\
    1.\ $xy\cdot z\equiv_{\eqref{ass}}x\cdot yz$,\quad
    2.\ $yx\cdot z\equiv_{\eqref{com}}xy\cdot z
    \equiv_{\eqref{ass}}x\cdot yz$;
  \item $g(z)=0$\\
    1.\ $xy\cdot z\equiv_{\eqref{ass}}x\cdot yz$,\quad
    2.\ $yx\cdot z\equiv_{\eqref{ass}}y\cdot xz
    \equiv_{\eqref{com}}y\cdot zx\equiv_{\eqref{ass}}yz\cdot x$;
  \item $g(y)=g(z)\neq0$\\
    1.\ $xy\cdot z\equiv_{\eqref{ass}}x\cdot yz$,\quad
    2.\ $yx\cdot z$. 
  \end{itemize}
  
  We now proceed to the last case:
  \begin{itemize}
  \item $g(z)=0$\\
    $(tx\cdot w)z\equiv_{\eqref{ass}}tx\cdot wz$;
  \item $g(z)=g(t)$\\
    $(tx\cdot w)z\equiv_{\eqref{ass}}t(x\cdot wz)
    \equiv_{\eqref{com}}t(wz\cdot x)\equiv_{\eqref{ass}}(t\cdot wz)x$;
  \item $\langle g(t),g(z)\rangle=\mathbb{Z}_2^2$\\
    $(tx\cdot w)z\equiv_{\eqref{cocomuta2}}z(wt\cdot x)\equiv_{
      \eqref{ass}}(z\cdot wt)x$.
  \end{itemize}
\end{proof}

\begin{cor}\label{forma0}
  Let $f$ be a multihomogeneous polynomial, $x$ the greatest
  element that $f$ depends on and $n=\deg_xf$, where $g(x)=0$. Then we
  have one of the following:
  \begin{itemize}
  \item $f \equiv \Sigma_{i=0}^n x^i y_i x^{n-i}$ if $g(f) \neq0 $
    therefore $g(y_i) =g(f) \neq 0$;
  \item $f \equiv \Sigma_{i=0}^n \Sigma_j y_{i,j} x^i \cdot z_{i,j}
    x^{n-i}$ if $g(f) =0$, where $g(y_{i,j}) =g(z_{i,j}) \neq 0$.
  \end{itemize}
\end{cor}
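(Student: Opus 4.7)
My plan is to prove the corollary by strong induction on $n = \deg_x f$, combining Lemma \ref{lem0} with Shirshov's theorem. Since the generators of $I$ are multilinear, $I$ is multihomogeneous, so it suffices to fix a multihomogeneous monomial $u$ of the same multidegree as $f$ (and, by Shirshov's theorem, take $u$ to be a regular $r_2$-word) and reduce it to the claimed form. Applying Lemma \ref{lem0} to $u$ puts it into one of the three normal forms $\pm yx$, $\pm xy$, or $\pm yx\cdot z$.

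For the first two forms, the inner monomial $y$ has $\deg_x y = n-1$ and $g(y) = g(u)$, so the induction hypothesis expresses $y$ in the desired normal form. Multiplying by $x$ on the appropriate side then amounts to absorbing one extra copy of $x$ into an adjacent block of $x$'s; each such re-association invokes identity \eqref{ass}, whose hypothesis is met because every relevant associator has $x$ (grade $0$) as one entry while the remaining entries share either grade $0$ or a common non-zero grade, so their grades generate a cyclic subgroup of $\mathbb{Z}_2^2$.

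The technical core is the third form $u \equiv \pm yx \cdot z$ with $g(y) = g(z) = h \neq 0$, where the remaining $n-1$ copies of $x$ are split between $y$ and $z$ with $\deg_x y + \deg_x z = n-1$. I apply the non-zero-grade inductive hypothesis to $y$ and $z$ separately to obtain $y \equiv \sum \alpha_a x^a y^*_a x^{b_a}$ and $z \equiv \sum \beta_c x^c z^*_c x^{d_c}$ with $y^*_a, z^*_c$ free of $x$. The task is then to rewrite each resulting summand $(x^a y^*_a x^{b_a})\cdot x\cdot (x^c z^*_c x^{d_c})$ in the canonical form $y^*_a x^p\cdot z^*_c x^q$ with $p+q = n$.

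I expect the main obstacle to be this rearrangement, which must push both $x^a$ and $x^c$ from the left of $y^*_a$ and $z^*_c$ to their right. My approach is first to use \eqref{ass} (available throughout because the offending triples of grades remain cyclic, namely of the shapes $(0,h,0)$, $(h,0,0)$, $(0,h,h)$ or $(h,h,0)$) to funnel the internal $x$-letters into adjacent blocks; then, once the product has been collapsed to $x^a\cdot W$ where $W$ is a product of two factors both of grade $h$ and therefore $g(W) = 0$, to invoke identity \eqref{com} to commute $x^a$ past $W$ to the far right; a final round of associativity moves via \eqref{ass} then fuses the remaining $x$-blocks onto the right ends of $y^*_a$ and $z^*_c$, yielding the claimed canonical form and closing the induction.
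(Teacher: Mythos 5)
Your proposal is correct and follows essentially the same route as the paper: induction on $n=\deg_xf$, reduction to regular $r_2$-words via Shirshov's theorem, peeling off one occurrence of $x$ with Lemma \ref{lem0}, and applying the inductive hypothesis to the resulting factors. The only difference is that you spell out the final reassembly (funnelling the $x$-blocks to the right using \eqref{ass} within the order-two grade subgroup and \eqref{com} on the grade-zero product $W$), a step the paper's proof leaves implicit, and your account of it is sound.
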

\begin{proof}
  We shall prove the corollary by induction on $n$, by Shirshov's
  theorem we may assume that all of $f$ monomials are regular
  $r_2$-words, the initial case is just the lemma \ref{lem0}, which is
  already proved. Suppose that the assertion is valid for polynomials
  of degree $n$ then it is valid for polynomials of degree $n+1$, with
  effect, let $\deg_xf =n +1$.
  
  Suppose that $g(f) \neq 0$ then $f \equiv px + xh$ by lemma
  \ref{lem0} and by the induction hypothesis we have that $px \equiv (
  \Sigma_{i=0}^n x^i p_i x^{n-i}) x$ and $xh \equiv x (\Sigma_{i=0}^n
  x^i h_i x^{n-i})$, which proves this case.
  
  Suppose that $g(f) =0$ then $f \equiv px + \Sigma_j y_j x \cdot z_j$
  by lemma \ref{lem0} and by the induction hypothesis we have that $p
  \equiv \Sigma_{i=0}^n \Sigma_j u_{i,j} x^i \cdot w_{i,j} x^{n-i}$,
  $y_j \equiv \Sigma_{i=0}^{n_j} x^i p_{i,j} x^{n_j-i}$ and $z_j
  \equiv \Sigma_{i=0}^{m_j} x^i h_{i,j} x^{m_j-i}$, s.t., $m_j +n_j
  =n$. Which proves this case and with that the corollary.
\end{proof}

\begin{prop}\label{prop0}
  Let $f$, $x$ and $n$ be as in \ref{forma0}, suppose that $f \in T_{
    \mathbb{Z}_2^2} (\mathbb{O})$ and $F$ is an infinite field. Then
  $y_i \in T_{\mathbb{Z}_2^2} (\mathbb{O})$ for $i =1, \ldots, n$ in
  the first case of \ref{forma0} or $\Sigma_j y_{i,j} x \cdot z_{i,j}
  \in T_{\mathbb{Z}_2^2} (\mathbb{O})$ for $i= 1, \ldots, n$ in the
  second case of \ref{forma0}.
\end{prop}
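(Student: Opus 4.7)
The plan is to isolate each coefficient by evaluating $x$ at a generic element of $\mathbb{O}_0 = \mathbf{K}$ and performing a Vandermonde-style extraction.

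First I would invoke Proposition \ref{central} to extend scalars to the algebraic closure, so that $\mathbf{K}$ is split: $\mathbf{K} = Fe_1 \oplus Fe_2$ with orthogonal idempotents exchanged by the involution. Under this decomposition, for $x = \lambda e_1 + \mu e_2$ one has $x^k = \lambda^k e_1 + \mu^k e_2$, $\bar x^k = \mu^k e_1 + \lambda^k e_2$, and therefore $\bar x^i x^{n-i} = \mu^i \lambda^{n-i} e_1 + \lambda^i \mu^{n-i} e_2$. For each $h \neq 0$ I would fix a Cayley-Dickson generator $v_h \in \mathbb{O}_h \subseteq \mathbf{K}^\perp$ with $v_h^2 \in F^\times$; then every element of $\mathbb{O}_h$ writes uniquely as $v_h c$ with $c \in \mathbf{K}$, and parts (B) and (C) of Lemma \ref{Hurwitz lemma} let me compute products involving $x$ entirely inside $\mathbf{K}$.

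For the first case, after fixing an arbitrary substitution of the variables other than $x$, each $y_i$ specializes to $b_i = vc_i \in \mathbb{O}_{g(f)}$ with $v = v_{g(f)}$. Iterating Lemma \ref{Hurwitz lemma}(B) gives $x^i b_i x^{n-i} = v(\bar x^i x^{n-i} c_i)$, so the identity $f=0$ becomes $v \sum_i \bar x^i x^{n-i} c_i = 0$; since $v^2 \in F^\times$ multiplication by $v$ is injective and so $\sum_i \bar x^i x^{n-i} c_i = 0$ for every $x \in \mathbf{K}$. Writing $c_i = \alpha_i e_1 + \beta_i e_2$, this splits into
\[
\sum_i \alpha_i \mu^i \lambda^{n-i} = 0 = \sum_i \beta_i \lambda^i \mu^{n-i} \qquad \forall\, \lambda, \mu \in F,
\]
and linear independence of the monomials $\mu^i \lambda^{n-i}$ forces each $\alpha_i = \beta_i = 0$, whence $b_i = 0$.

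For the second case, each pair $y_{i,j}, z_{i,j}$ lies in a common nonzero component $\mathbb{O}_{h_{i,j}}$ and specializes to $v_{h_{i,j}} c_{i,j}$ and $v_{h_{i,j}} d_{i,j}$ respectively. Combining Lemma \ref{Hurwitz lemma}(B) and (C) collapses each term $(v_{h_{i,j}} c_{i,j}) x^i \cdot (v_{h_{i,j}} d_{i,j}) x^{n-i}$ to $v_{h_{i,j}}^2 \, x^{n-i} \bar x^i \, d_{i,j} \bar c_{i,j} \in \mathbf{K}$. Setting $E_i := \sum_j v_{h_{i,j}}^2 d_{i,j} \bar c_{i,j}$, the identity becomes $\sum_i E_i \, x^{n-i} \bar x^i = 0$ in $\mathbf{K}$, and the same Vandermonde argument yields $E_i = 0$ for each $i$. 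Re-evaluating $\sum_j y_{i,j} \tilde x \cdot z_{i,j}$ at the same substitution and any $\tilde x \in \mathbf{K}$ then gives $\bar{\tilde x} \cdot E_i = 0$, so this polynomial vanishes identically on $\mathbb{O}$. The main obstacle I anticipate is the bookkeeping in this second case, where different $j$'s may contribute through different grades $h_{i,j}$; it is resolved by noting that every such contribution collapses into the commutative two-dimensional subalgebra $\mathbf{K}$ via the scalars $v_h^2 \in F^\times$, so that a single linear-independence argument in $\mathbf{K}$ handles all grades at once.
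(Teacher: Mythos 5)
Your proposal is correct and follows essentially the same route as the paper: pass to the algebraic closure via Proposition \ref{central} so that $\mathbb{O}_0\cong F\oplus F$ splits, use the relations of Lemma \ref{Hurwitz lemma} to collapse each term into $v_h$ times an element of $\mathbb{O}_0$, and extract the coefficients $\bar x^i x^{n-i}$ by linear independence of the monomials $\mu^i\lambda^{n-i}$ over an infinite field. Your treatment of the second case (the scalars $v_{h}^2$, the elements $E_i=\sum_j y_{i,j}z_{i,j}$, and the final re-evaluation giving $\bar{\tilde x}E_i=0$) matches the paper's $p_i=\sum_j y_{i,j}z_{i,j}$ argument, only spelled out in more detail.
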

\begin{proof}
  If $g(f) \neq 0$ (resp. $g(f) =0$) we have, by \ref{forma0}, that $f
  \equiv \Sigma_{i=0}^n x^i y_i x^{n-i}$ where $g(y_i) =g(f) \neq0$
  (resp. $f \equiv \Sigma_{i=0}^n \Sigma_j y_{i,j} x^i \cdot z_{i,j}
  x^{n-i}$, where $g(y_{i,j}) =g(z_{i,j}) \neq0$). By \ref{central} we
  can assume that $F$ is algebraically closed, therefore $\mathbb{O}_0
  \cong F \oplus F$ where $\overline{(a,b)} =(b,a)$.
  
  Under any evaluation of $f$ we have that $f =y_i \bar{x}^i x^{n-i}$
  (resp. $f= p_i \bar{x}^i x^{n-i}$ where $p_i =\Sigma_j y_{i,j}
  z_{i,j}$). Let $x=(x_1, x_2)$ and $y_i= v(y_i', y_i'')$ where $v$ is
  given by the Cayley-Dickson process (resp. $p_i= (p_i'
  ,p_i'')$). Then $f= v(\Sigma_i^n y_i' x_1^{n-i} x_2^i,$ $\Sigma_i^n
  y_i' x_1^i x_2^{n-i})$ (resp. $f= (\Sigma_i^n p_i' x_1^{n-i} x_2^i,
  \Sigma_i^n p_i' x_1^i x_2^{n-i})$). Let $x_1, x_2$ be algebraically
  independent variables over $F$ then $y_i' =y_i'' =0$ (resp. $p_i'
  =p_i'' =0$, $0 =p_i =xp_i =x \Sigma_j y_{i,j} z_{i,j} \equiv
  \Sigma_j y_{i,j} x \cdot z_{i,j}$) under any substitution in $F$.
\end{proof}

\begin{prop}\label{comp0sotem1}
  Let $f \in T_{\mathbb{Z}_2^2} (\mathbb{O}) \setminus I$
  multihomogeneous of minimal degree. Then $\deg_0f \leq 1$.
\end{prop}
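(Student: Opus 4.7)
The plan is to argue by contradiction: assume $\deg_0 f \geq 2$ and derive $f \in I$. Let $x$ be the greatest variable on which $f$ depends; since the ordering convention makes $0 \in \mathbb{Z}_2^2$ the greatest grading and $f$ has at least one degree-zero variable, $g(x) = 0$, and I set $n := \deg_x f \geq 1$. Corollary \ref{forma0} places $f$, modulo $I$, in normal form: $f \equiv \sum_{i=0}^n x^i y_i x^{n-i}$ when $g(f) \neq 0$ (with each $y_i$ free of $x$), or $f \equiv \sum_{i=0}^n \sum_j y_{i,j} x^i \cdot z_{i,j} x^{n-i}$ when $g(f) = 0$ (with $y_{i,j}, z_{i,j}$ free of $x$ and of common nonzero grading). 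Proposition \ref{prop0} then asserts that each coefficient $y_i$, respectively each $\sum_j y_{i,j} x \cdot z_{i,j}$, already lies in $T_{\mathbb{Z}_2^2}(\mathbb{O})$.

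When $g(f) \neq 0$, each $y_i$ has degree $\deg f - n < \deg f$, so by the minimality of $f$ one has $y_i \in I$; since $I$ is an ideal, $f \equiv \sum x^i y_i x^{n-i} \in I$, contradicting $f \notin I$. When $g(f) = 0$ with $n \geq 2$, the coefficient $\sum_j y_{i,j} x \cdot z_{i,j}$ has degree $\deg f - n + 1 < \deg f$ and lies in $I$ by minimality; the associator identity \eqref{ass} — applicable because the gradings of $y_{i,j} x^a$, $z_{i,j}$ and $x^b$ always lie in the order-two subgroup generated by the common grading of the $y_{i,j}$ — then lets me reattach the remaining powers of $x$ and conclude that each summand $\sum_j y_{i,j} x^i \cdot z_{i,j} x^{n-i}$ lies in $I$, giving $f \in I$ and again a contradiction.

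The genuinely delicate case is $g(f) = 0$ with $n = 1$, which arises under $\deg_0 f \geq 2$ precisely when a second degree-zero variable $x'$ appears inside the coefficients. Here $\sum_j y_{i,j} x \cdot z_{i,j}$ has degree exactly $\deg f$, so minimality no longer applies directly. My approach is first to substitute $x = 1$ (a valid substitution since $1 \in \mathbb{O}_0$), producing the lower-degree identity $\sum_j y_{i,j} z_{i,j} \in T_{\mathbb{Z}_2^2}(\mathbb{O})$ of degree $\deg f - 1$, which by minimality lies in $I$. The main obstacle — and the crux of the argument — is then the lift: one must deduce $\sum_j y_{i,j} x \cdot z_{i,j} \in I$ from $\sum_j y_{i,j} z_{i,j} \in I$, and I would do so by combining the Hurwitz-type identities \eqref{cocomuta}--\eqref{entesq0?} with repeated applications of \eqref{ass} to transport the single $x$ into position; once the lift is obtained for both $i = 0$ and $i = 1$, summing recovers $f \in I$ and closes the contradiction.
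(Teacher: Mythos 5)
Your first two cases are fine and are in the spirit of the paper's argument, but the ``delicate case'' you single out is not a corner case --- it is the entire content of the proposition, and your proposal does not close it. The step you call the lift, namely deducing $\sum_j y_{j} x \cdot z_{j} \in I$ from $\sum_j y_{j} z_{j} \in I$, asks you to insert a variable into the interior of the monomials of an element of $I$. A $T$-ideal is closed under graded substitutions and under multiplication by arbitrary elements; it is not closed under insertion of a new variable into the middle of its monomials, and knowing a formal expression of $\sum_j y_j z_j$ as a consequence of the generators of $I$ gives no canonical way to thread an $x$ through that expression. Nor can the Hurwitz-type identities \eqref{cocomuta}--\eqref{entesq0?} together with \eqref{ass} simply ``transport the single $x$ into position'': in $\mathbb{O}$ one has $ay = y\bar a$ for $a\in\mathbb{O}_0$, $y$ of nonzero degree, and $\bar a\neq a$ on $\mathbb{O}_0$, so $yx\cdot z$ is not congruent to any rebracketing of $(yz)$ times a power of $x$. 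So the crux is named but not proved; this is a genuine gap.

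The paper sidesteps the lift altogether by choosing a different normal form. Using \ref{prop0} inductively it first reduces to $f \equiv \sum_i y_i (x_1\cdots x_n)\cdot z_i$, where $x_1,\dots,x_n$ are \emph{all} the degree-zero variables gathered into a single product occupying one slot, and $y_i, z_i$ are free of degree-zero variables. Setting $x_2=\dots=x_n=1$ then produces $\sum_i y_i x_1\cdot z_i \in T_{\mathbb{Z}_2^2}(\mathbb{O})$ of degree $\deg f-(n-1)<\deg f$ whenever $n>1$, hence in $I$ by minimality; and $f$ is recovered from it by the graded substitution $x_1\mapsto x_1\cdots x_n$, which \emph{is} a $T$-ideal operation since $g(x_1\cdots x_n)=0=g(x_1)$. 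In other words, the paper keeps one degree-zero slot open and substitutes a product into it, rather than deleting a variable and trying to reinsert it. To repair your proof you would need to establish this gathering normal form first (that is where \eqref{ass}, \eqref{com} and the identities \eqref{cocomuta}--\eqref{entesq0?} actually get used); once you have it, the contradiction is immediate and no lift is required.
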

\begin{proof}
  It's enough to consider the case $f= \Sigma_i y_i (x_1 \cdots x_n)
  \cdot z_i$ where $g(x_j) =0 \neq g(y_i) =g(z_i)$, $j=1, \ldots, n$
  and $\deg_0 f= n$, by induction and \ref{prop0}. Substituting $x_2,
  \ldots, x_n$ for $1$ we see that $\Sigma_i y_i x_1 \cdot z_i$ is an
  identity.
  
  Suppose, by contradiction, that $n > 1$ therefore $\Sigma_i y_i x_1
  \cdot z_i \in I$, by the minimality of $f$'s degree. If we let $x_1$
  go to $x_1 \cdots x_n$ we see that $f$ is a consequence of $\Sigma_i
  y_i x_1 \cdot z_i$, which is a contradiction.
\end{proof}

\section{The Strictly Non Zero Component Variables}
\begin{defin}
  Let $U$ be the polynomial sub-algebra generated by all variables
  that aren't from the zero component and $ ^*:U\to U$ linear defined
  on monomials by induction on the degree as follow: $u^*:=-u$ if
  $\deg u=0$ and $(vw)^*:=w^*v^*$.
\end{defin}

\begin{lem}\label{invol}
  Let $f\in U$ and $x$ a non zero component variable. Then we have the
  following:
  \begin{enumerate} 
  \item $^*$ is an involution of $U$;
  \item if $f_0=0$ then $f^*\equiv-f$;
  \item if $f=v\cdot w$, $g(w)=g(v)\neq0$ then $f^*\equiv w\cdot v$;
  \item if $f_{g(x)}=0$ then $fx\equiv xf^*$;
  \item $f^*$ goes to $\bar{f}$ under any evaluation;
  \end{enumerate}
\end{lem}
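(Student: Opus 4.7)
The plan is to handle the five parts in the stated order: parts (1) and (5) are routine inductions, part (3) is a one-line corollary of (2), and the bulk of the work is in part (2), which then also drives part (4). Throughout I would work with multihomogeneous polynomials, which is permissible since the generators of $I$ are multilinear.

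Part (1) is essentially built into the definition: linearity and anti-multiplicativity are given, and $(f^*)^* = f$ is an easy induction on $\deg f$, using $(u^*)^* = -(-u) = u$ and $((vw)^*)^* = (v^*)^*(w^*)^* = vw$. For (2), I would induct on the total degree of a multihomogeneous $f$ with $g(f)\neq 0$. The base case is a variable $u$, for which $u^* = -u$ by definition. Otherwise write $f$ as a sum of monomials $v_iw_i$, multihomogeneous in the outer product. If both $g(v_i)$ and $g(w_i)$ are non-zero for some monomial, they must be distinct (they sum to $g(f)\neq 0$), hence $\langle g(v_i),g(w_i)\rangle = \mathbb{Z}_2^2$; then the induction hypothesis gives $v_i^*\equiv -v_i$ and $w_i^*\equiv -w_i$, and \eqref{antcom} yields $(v_iw_i)^* = w_i^*v_i^* \equiv w_iv_i \equiv -v_iw_i$. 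If instead $g(v_i)=0$, the monomial $v_i$ further decomposes as $v_i'v_i''$ with $g(v_i')=g(v_i'')\neq 0$; the induction hypothesis on $v_i'$ and $v_i''$ gives $v_i^* \equiv v_i''v_i'$, and then \eqref{cocomuta} in the form $ab\cdot w = w\cdot ba$ furnishes $w\cdot v_i''v_i' \equiv v_i'v_i''\cdot w$, closing $f^*\equiv -f$. The case $g(w_i)=0$ is symmetric.

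Part (3) is now immediate: since $g(v)=g(w)\neq 0$, both $v_0$ and $w_0$ vanish, so $v^*\equiv -v$ and $w^*\equiv -w$ by (2), whence $(vw)^* = w^*v^* \equiv wv$. For (4), split $f$ into its multihomogeneous components; the $g(x)$-component is zero by hypothesis. Each component $f_h$ of nonzero grade $h\neq g(x)$ satisfies $f_h^*\equiv -f_h$ by (2), and since $\langle h,g(x)\rangle=\mathbb{Z}_2^2$, identity \eqref{antcom} yields $f_hx\equiv -xf_h\equiv xf_h^*$; for the grade-zero component, each summand has the form $vw$ with $g(v)=g(w)\neq 0$, and (3) combined with \eqref{cocomuta} gives $(vw)x\equiv x\cdot wv = x(vw)^*$ directly. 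Finally, (5) is a short induction on degree: the base case uses that elements of $\mathbb{O}_h$ for $h\neq 0$ are self-anti-conjugate (from Lemma \ref{Hurwitz lemma}(A)), so $u^*=-u$ evaluates correctly to $\bar u$, and the inductive step uses only that conjugation is an anti-involution on the composition algebra $\mathbb{O}$. The main obstacle in the entire proof is the mixed-grade subcase of (2), which forces one to descend into the monomial structure of a grade-zero factor and to invoke the precise shape of \eqref{cocomuta} in order to commute $w$ past the reversed pair $v_i''v_i'$.
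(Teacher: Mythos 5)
Your overall architecture is the same as the paper's: prove (1) and (5) by easy inductions, prove (2) by induction on degree with a case split on the grades of the two top-level factors (using \eqref{antcom} when both grades are nonzero and \eqref{cocomuta} when one is zero), and then read off (3) and (4). The one genuine problem is in the mixed case of your proof of (2), and it recurs verbatim in your treatment of the grade-zero component in (4): when $g(v_i)=0$ you assert that the monomial $v_i$ ``further decomposes as $v_i'v_i''$ with $g(v_i')=g(v_i'')\neq 0$.'' This is false for a general monomial of $U$. Take $v_i=(ab)(cd)$ with $g(a)=g(b)=(1,0)$ and $g(c)=g(d)=(0,1)$: then $g(v_i)=0$, but both top-level factors $ab$ and $cd$ have grade $0$, so your induction step $v_i^*\equiv v_i''v_i'$ is not covered by the hypothesis, and your appeal to \eqref{cocomuta} (which requires $g(a)=g(b)\neq0$) does not apply. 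Since every other part of the lemma leans on (2), this is a real hole rather than a cosmetic one.

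The paper avoids opening up the grade-zero factor at all: it proves (2) and (4) by a \emph{simultaneous} induction, and in the mixed case of (2) it invokes the induction hypothesis for (4) --- with the nonzero-grade monomial substituted for the variable $x$, which is legitimate because $I$ is closed under graded substitutions --- to move the grade-zero factor past the other one, picking up a $^*$ in the process. If you prefer to keep (2) self-contained, you need an auxiliary induction showing that $uw\equiv wu^*$ for every grade-zero monomial $u\in U$ and every nonzero-grade monomial $w$; the troublesome sub-case $u=u'u''$ with $g(u')=g(u'')=0$ is then handled by \eqref{ass} (all grades involved lie in $\langle g(w)\rangle$, which has order two), the inductive hypothesis applied to $u'$ and $u''$ separately, and finally \eqref{com} to rewrite $u'^*u''^*$ as $u''^*u'^*=(u'u'')^*$. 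With that supplement your argument closes; the remaining parts of your write-up (the base cases, the use of \ref{Hurwitz lemma}(A) to see that nonzero-grade elements satisfy $\bar a=-a$ in (5), and the derivation of (3) from (2)) agree with the paper and are correct.
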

\begin{proof}
  By linearity it's enough to prove the lemma only for the case where
  $f$ is a monomial.
  
  (1) We'll start proving by induction that $^*$ has
  order 2. If $\deg f=1$ then $f^{*^*}=(-f)^*=-(-f)=u$ if $\deg
  f\neq1$ then $f=v\cdot w$ and $f^{*^*}=(v\cdot w)^{*^*}=(w^*\cdot
  v^*)^*=v^{*^*}\cdot w^{*^*}=v\cdot w=f$. Clearly $^*$ is a
  anti-homomorphism.
  
  Both (2) and (4) are valid when $\deg f=1$. Suppose they are valid
  for all monomials of degree less than $n$, then they are valid for
  monomials of degree $n$, with effect:
  
  (2) $f=v\cdot w$, we have the following cases:
  \begin{itemize}
  \item $\langle g(w),g(v)\rangle=\mathbb{Z}_2^2$\\
    $f^*=w^*\cdot v^*=w^*\cdot v^*\equiv(-w)\cdot(-v)$ by the
    induction hypothesis for (2), $(-w)\cdot(-v)\equiv_{\eqref{antcom}}
    -v\cdot w=-f=f^*$ by the induction hypothesis for (4);
  \item $g(w)=0\neq g(v)$\\
    $f^*=w^*\cdot v^*\equiv w^*\cdot (-v)$ by the induction hypothesis
    for (2), $w^*\cdot (-v)\equiv -v\cdot w=-f$ by the induction
    hypothesis for (4);
  \item $g(v)=0\neq g(w)$\\
    $f^*=w^*\cdot v^*\equiv (-w)\cdot v^*$ by the induction hypothesis
    for (2), $(-w)\cdot v^*\equiv -v\cdot w=-f$ by the induction
    hypothesis for (4);
  \end{itemize}
  
  (4) $f=v\cdot w$, we have the following cases:
  \begin{itemize}
  \item $\langle g(f),g(x)\rangle=\mathbb{Z}_2^2$\\
    $fx\equiv_{\eqref{antcom}}-xf\equiv xf^*$ by the induction
    hypothesis for (2);
  \item $g(f)=0$, $g(v)=g(w)\neq0$\\
    $fx=(v\cdot w)x\equiv_{\eqref{cocomuta}}x(wv)=x((-w)(-v))=xf^*$
  \end{itemize}
  
  (3) Clear after we proved (2).
  
  (5) Trivial.
\end{proof}

\begin{lem}\label{lemnon0}
  Let $u$ be a regular $r_2$-word and $x$ the greatest element that
  $u$ depends on, suppose that $g(x)\neq0$. Then we have one of the
  following:
  \begin{itemize}
  \item $u\equiv\pm yx$,
  \item $u\equiv\pm xy$ if $g(x)=g(y)$,
  \item $u\equiv\pm z\cdot yx$ if $g(x)=g(y)$ and $\langle
    g(x),g(z)\rangle=\mathbb{Z}_2^2$ or
  \item $u\equiv\pm z\cdot xy$ if $g(x)=g(y)=g(z)$;
  \end{itemize}
  where $y,z$ are monomials.
\end{lem}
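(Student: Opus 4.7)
The plan is to follow the template of Lemma \ref{lem0}: I would write $u = (\ldots((u_1 u_2) u_3) \ldots u_{n-1}) u_n$ with each $u_i$ a regular $r_1$-word, and induct on $n$. In the base case $n = 1$, $u$ is itself a regular $r_1$-word whose greatest letter is $x$; since the word is totally ordered, all copies of $x$ sit at the tail, so $u = y \cdot x$, matching the first listed form.

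For the inductive step I would split on whether $x$ occurs inside the final factor $u_n$. If it does, then $u_n$ ends in $x$, so I can write $u_n = y \cdot x$ and hence $u = z \cdot yx$ with $z$ the previous partial product. I would then run through sub-cases on $g(z)$ and $g(y)$: when $\langle g(z), g(y)\rangle \neq \mathbb{Z}_2^2$ the associator identity \eqref{ass} rebrackets $z \cdot yx$ into $(zy) \cdot x$, landing in form 1 (or into form 4 when all three gradings equal $g(x)$). When $\langle g(z), g(y)\rangle = \mathbb{Z}_2^2$, the flip identities \eqref{cocomuta}, \eqref{entdir} and \eqref{entesq0?} convert $z \cdot yx$ into one of the remaining forms, the outcome depending on whether $g(x)$ agrees with $g(z)$ or with $g(y)$.

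If instead $x$ does not appear in $u_n$, I would write $u = u' \cdot u_n$, apply the induction hypothesis to $u'$, and obtain one of the four target forms for $u'$; it then remains to show that right-multiplying by $u_n$ preserves (up to sign and equivalence) one of the four forms. Each possibility for $u'$ crosses with the grading options for $u_n$ to give a matrix of sub-cases, again dispatched by \eqref{ass}, \eqref{antcom}, \eqref{cocomuta2}, \eqref{cocomuta3}, and the involution machinery of Lemma \ref{invol}.

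The main obstacle is precisely this combinatorial explosion of sub-cases; it is larger here than in Lemma \ref{lem0} because we now have four target forms instead of three and the greatest element $x$ is no longer central. The chief simplifier will be Lemma \ref{invol}: part (4) lets us trade left for right appearances of a subword, and part (2) identifies $f^*$ with $-f$ when $g(f) = 0$, collapsing many configurations into their mirror counterparts and cutting the verification roughly in half.
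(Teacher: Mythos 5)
Your skeleton coincides with the paper's: the same decomposition of $u$ into a left-normed product of regular $r_1$-words, the same induction on the number of factors, the same split on whether $x$ occurs in the last factor $u_n$, and the same toolkit of graded identities together with the involution of Lemma \ref{invol}. The base case and the overall architecture of the inductive step are exactly what the paper does.

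There is, however, one concrete step that fails as stated. In the branch where $x$ occurs in $u_n$ you claim that whenever $\langle g(z),g(y)\rangle\neq\mathbb{Z}_2^2$ the identity \eqref{ass} rebrackets $z\cdot yx$ into $(zy)\cdot x$. But \eqref{ass} requires $\vert\langle g(x),g(y),g(z)\rangle\vert\leq2$, and since $g(x)\neq0$ this forces $g(y),g(z)\in\{0,g(x)\}$. Your condition omits $g(x)$ from the generated subgroup, so it wrongly sweeps in the configurations where $g(y)$ and $g(z)$ lie in the cyclic subgroup generated by the \emph{other} nonzero component --- for instance $g(y)=g(z)=h$ with $h\neq0,g(x)$, or $g(y)=0$ with $\langle g(x),g(z)\rangle=\mathbb{Z}_2^2$. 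In those configurations associativity is unavailable, and the paper instead runs chains such as $z\cdot yx\equiv xy\cdot z\equiv x\cdot zy\equiv yz\cdot x$ via \eqref{antcom}, \eqref{entdir} and \eqref{cocomuta}, or invokes the involution $^*$ of Lemma \ref{invol} together with \eqref{antcom} and \eqref{entdir}. Relatedly, your list of ``flip identities'' never mentions \eqref{lokao} and \eqref{lokao2}, which are indispensable for the genuinely three-component configurations (e.g.\ $g(x)+g(y)+g(z)=0$ with all three nonzero and distinct, and the sub-case $g(w)=g(x)+g(t)$ when right-multiplying $t\cdot sx$ by $w$); without them the reduction cannot be closed. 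These are repairable slips --- the correct dichotomy is on $\langle g(x),g(y),g(z)\rangle$, not on $\langle g(y),g(z)\rangle$ --- but as written the first sub-case of your plan applies an identity outside its domain of validity.
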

\begin{proof}
  We have that $u=(\ldots ((u_1u_2)u_3)\dots u_{n-1})u_n$ where each
  $u_i$ is a regular $r_1$-word. We'll prove the lemma by induction on
  $n$. The initial case is exactly $yx$. If $x$ appears on $u_n$ and
  $n\neq1$ then we need only to agglutinate what's to the left of $x$,
  that is, we have $z\cdot yx$, where $z=(\ldots ((u_1 u_2) u_3) \dots
  ) u_{n-1}$, $yx=u_n$ and want to obtain $wx$ or $t\cdot sx$  
  with $g(s)=g(x)$ and $\langle g(x),g(t)\rangle=\mathbb{Z}_2^2$:
  \begin{itemize}
  \item $\langle g(x),g(y),g(z)\rangle\neq\mathbb{Z}_2^2$\\
    $z\cdot yx\equiv_{\eqref{ass}}zy\cdot x$;
  \item $\langle g(x),g(y)\rangle=\mathbb{Z}_2^2,\ g(z)=0$\\
    $z\cdot yx\equiv_{\eqref{antcom}}-z\cdot xy\equiv_{
      \eqref{entesq0?}}-x\cdot yz\equiv_{\eqref{antcom}}yz\cdot x$;
  \item $\langle g(x),g(y)\rangle=\mathbb{Z}_2^2,\ g(y)=g(z)$\\
    $z\cdot yx\equiv_{\eqref{antcom}}xy\cdot z\equiv_{
      \eqref{entdir}}x\cdot zy\equiv_{\eqref{cocomuta}}yz\cdot x$;
  \item $\langle g(x),g(y)\rangle=\mathbb{Z}_2^2,\ g(x)=g(z)$\\
    $z\cdot yx\equiv_{\eqref{antcom}}-yx\cdot z\equiv_{
      \eqref{entdir}}-y\cdot zx$;
  \item $\langle g(x),g(z)\rangle=\mathbb{Z}_2^2,\ g(y)=0$\\
    $z\cdot yx\equiv_{(\ref{invol})}z\cdot xy^*\equiv_{
      \eqref{antcom}}-xy^*\cdot z\equiv_{\eqref{entdir}}-x
    \cdot zy^*\equiv_{\eqref{antcom}}zy^*\cdot x$;
  \item $\langle g(x),g(z)\rangle=\mathbb{Z}_2^2,\ g(x)=g(y)$\\
    $z\cdot yx$, nothing to see here, move along;
  \item $\langle g(x),g(y),g(z)\rangle=\mathbb{Z}_2^2,\ 
    g(x)+g(y)+g(z)=0$\\
    $z\cdot yx\equiv_{\eqref{antcom}}-z\cdot xy\equiv_{
      \eqref{lokao},\eqref{antcom}}x\cdot zy$.
  \end{itemize}
  
  If x doesn't appear on $u_n$ then we have that $u\equiv\pm yx\cdot
  z$, $u\equiv\pm xy\cdot z$, $u\equiv\pm(t\cdot sx)w$ or $(z\cdot
  xy)w$  with $g(s)=g(x)$ and $\langle g(x),g(t)\rangle=
  \mathbb{Z}_2^2$ in the third case or $g(x)=g(y)=g(z)$ in the last
  case, by the induction hypothesis, and want to obtain $\pm wx$, $\pm
  xw$ or $\pm t\cdot sx$, with the same component restriction. Let's
  begin with the case $\pm(t\cdot sx)w$.
  \begin{itemize}
  \item $g(w)=0$\\
    $(t\cdot sx)w\equiv_{\eqref{ass}}t(s\cdot xw)\equiv_{
      (\ref{invol})}t(s\cdot w^*x)\equiv_{\eqref{ass}}
    t(sw^*\cdot x)$;
  \item $g(t)=g(w)$\\
    $(t\cdot sx)w\equiv_{(\ref{invol})}(xs\cdot t)w\equiv_{
      \eqref{ass}}x(s\cdot tw)$;
  \item $g(x)=g(w)$\\
    $(t\cdot sx)w\equiv_{\eqref{entdir}}t(w\cdot sx)\equiv_{
      \eqref{ass}}t(ws\cdot x)\equiv_{(\ref{invol})}t(x\cdot sw)
    \equiv_{\eqref{entdir}}(t\cdot sw)x$;
  \item $g(w)=g(x)+g(t)$\\
    $(t\cdot sx)w\equiv_{\eqref{entdir}}(tx\cdot s)w\equiv_{
      \eqref{antcom}}(s\cdot xt)w\equiv_{\eqref{entdir}}s(w\cdot xt)
    \equiv_{ \eqref{lokao},\eqref{antcom}}-s(x\cdot wt)
    \equiv_{\eqref{ass}}-sx\cdot wt\equiv_{(\ref{invol})}
    -wt\cdot xs$.
  \end{itemize}
  
  Moving to $\pm yx\cdot z$ and $\pm xy\cdot z$:
  \begin{itemize}
  \item $g(y)=0,\ \langle g(x),g(z)\rangle\neq\mathbb{Z}_2^2$\\
    $yx\cdot z\equiv_{(\ref{invol})}xy^*\cdot z\equiv_{
      \eqref{ass}}x\cdot y^*z$;
  \item $g(y)=0,\ \langle g(x),g(z)\rangle=\mathbb{Z}_2^2$\\
    $yx\cdot z\equiv_{(\ref{invol})}xy^*\cdot z\equiv_{
      \eqref{entdir}}x\cdot zy^*\equiv_{\eqref{antcom}}-zy^*\cdot x$;
  \item $\langle g(x),g(y)\rangle=\mathbb{Z}_2^2,\ g(z)=0$\\
    $yx\cdot z\equiv_{\eqref{antcom}}-xy\cdot z\equiv_{
      \eqref{entdir}}-x\cdot zy\equiv_{\eqref{antcom}}zy\cdot x$;
  \item $\langle g(x),g(y)\rangle=\mathbb{Z}_2^2,\ g(z)=g(x)$\\
    $yx\cdot z\equiv_{\eqref{entdir}}y\cdot zx$;
  \item $\langle g(x),g(y)\rangle=\mathbb{Z}_2^2,\ g(z)=g(y)$\\
    $yx\cdot z\equiv_{\eqref{antcom}}-xy\cdot z\equiv_{
      \eqref{entdir}}-x\cdot zy\equiv_{(\ref{invol})}-yz\cdot x$;
  \item $\langle g(x),g(y)\rangle=\mathbb{Z}_2^2,\ g(z)=g(x)+g(y)$\\
    $yx\cdot z\equiv_{\eqref{antcom}}-xy\cdot z\equiv_{\eqref{lokao2}
      \eqref{antcom}}zy\cdot x$;
  \item $g(x)=g(y),\ g(z)=0$\\
    1.\ $yx\cdot z\equiv_{\eqref{com}}z\cdot yx
    \equiv_{\eqref{ass}}zy\cdot x$,\quad
    2.\ $xy\cdot z\equiv_{\eqref{ass}}x\cdot yz$;
  \item $g(x)=g(y),\ \langle g(x),g(z)\rangle=\mathbb{Z}_2^2$\\
    1.\ $yx\cdot z\equiv_{(\ref{invol})}z\cdot xy
    \equiv_{\eqref{entdir}} zy\cdot x$,\quad
    2.\ $xy\cdot z\equiv_{(\ref{invol})}z\cdot yx$;
  \item $g(x)=g(y)=g(z)$\\
    1.\ $yx\cdot z\equiv_{\eqref{ass}}y\cdot xz$,\quad
    2.\ $xy\cdot z\equiv_{\eqref{ass}}x\cdot yz\equiv_{(\ref{invol}}
    zy \cdot x$.
  \end{itemize}
  
  We now proceed to the last case:
  \begin{itemize}
  \item $g(w)=0$\\
    $(z\cdot xy)w\equiv_{\eqref{ass}}z(x\cdot yw)$;
  \item $g(x)=g(w)$\\
    $(z\cdot xy)w\equiv_{\eqref{ass}}z(x\cdot yw)\equiv_{
      (\ref{invol})}z(wy\cdot x)\equiv_{\eqref{ass}}(z\cdot
    wy)x$;
  \item $\langle g(x),g(w)\rangle=\mathbb{Z}_2^2$\\
    $(z\cdot xy)w\equiv_{\eqref{antcom}}-w(z\cdot xy)\equiv_{
      \eqref{entesq0?}}-xy\cdot wz\equiv_{(\ref{invol})}
    -wz\cdot yx$.
  \end{itemize}
\end{proof}

\begin{cor}\label{formanon0}
  Let $f$ be a multihomogeneous polynomial, $x$ the greatest element
  that $f$ depends on and $n=\deg_xf$, where $g(x)\neq0$. Then we have
  one of the following:
  \begin{itemize}
  \item $f\equiv px^n+ \Sigma_{i=1}^n\Sigma_jp_{i,i+1}^jx\cdots
    p_{i,1}^jx^{n-i} + \Sigma_{i'=1}^n\Sigma_jxh_{i',i'}^jx\cdots
    h_{i',1}^j x^{n'-i}$, if $g(f)\in\langle g(x)\rangle$, where $g(
    p_{i,l}^j)=g(h_{i',l}^j)=g(x)$, $\forall i,i',j,l$, $(n+i+1)g(x)
    =g(f)$ and $(n+i')g(x)=g(f)$;
  \item $f\equiv zx^n+ \Sigma_{i=1}^n\Sigma_jz_j\cdot p_{i,i+1}^jx\cdots
    p_{i,1}^jx^{n-i} + \Sigma_{i'=1}^n\Sigma_jz_j\cdot xh_{i',i'}^jx\cdots
    h_{i',1}^j x^{n'-i}$, if $\langle g(f),g(x)\rangle=\mathbb{Z}_2^2$,
    where $g(p_{i,l}^j)=g(h_{i',l}^j)=g(x)$, $\langle g(z_j),g(x)
    \rangle=\mathbb{Z}_2^2$, $\forall i,i',j,l$, and $n-i\equiv n-i'
    \equiv 1\pmod{2}$
  \end{itemize}
\end{cor}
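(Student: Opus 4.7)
The plan is to mirror the proof of Corollary \ref{forma0} by induction on $n=\deg_x f$. Shirshov's theorem lets us restrict attention to regular $r_2$-words, and Lemma \ref{lemnon0} then reduces each monomial of $f$ modulo $I$ to one of the four shapes $\pm yx$, $\pm xy$, $\pm z\cdot yx$, or $\pm z\cdot xy$, subject to the component constraints stated there. The base case $n=1$ is the lemma itself, with the inner factors playing the role of the atoms $p_{i,l}^j$, $h_{i',l}^j$ (or the prefactor $z_j$) of the stated normal form.

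For the inductive step, write $\deg_x f=n+1$, process each monomial via Lemma \ref{lemnon0}, and invoke the induction hypothesis on the strictly smaller $x$-degree inner pieces $y$ (and $z$, when present). Their gradings always fit one of the two cases of the corollary, so the hypothesis delivers an expansion for them. Substituting back, a shape of type $yx$ appends the trailing $x$ to the rightmost $x$-block of the expansion of $y$ (incrementing the relevant index $i$), a shape of type $xy$ prepends $x$ at the leftmost position analogously, and the $z$-prefixed shapes feed into the corresponding summations with $z$ becoming the prefactor $z_j$. The reshuffling of grouping needed to push $x$ inside an already-expanded $y$ is legitimated by the associator identity \eqref{ass}, since the degrees involved generate a subgroup of $\mathbb{Z}_2^2$ of order at most two.

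The division into the two cases of the corollary is dictated by $g(f)$. The shape $\pm z\cdot yx$ has overall grading $g(z)\notin\langle g(x)\rangle$, so it appears only when $\langle g(f),g(x)\rangle=\mathbb{Z}_2^2$, whereas the other three shapes all have grading in $\langle g(x)\rangle$. Hence if $g(f)\in\langle g(x)\rangle$ only the shapes $\pm yx$, $\pm xy$, and $\pm z\cdot xy$ appear, and they are precisely what is enumerated by the two summation blocks of case~1; if $\langle g(f),g(x)\rangle=\mathbb{Z}_2^2$ then only $\pm yx$ and $\pm z\cdot yx$ appear, producing the summation blocks of case~2 together with the leading term $zx^n$. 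The parity conditions $(n+i+1)g(x)=g(f)$, $(n+i')g(x)=g(f)$ and $n-i\equiv n-i'\equiv 1\pmod{2}$ merely record that every summand has total grading $g(f)$ and are preserved automatically by the induction.

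The main obstacle I expect is the bookkeeping: correctly aggregating the several subcases from Lemma \ref{lemnon0} into the summation blocks of the corollary, shifting the indices $i$ and $i'$ consistently when the induction hypothesis deposits an extra $x$ at a boundary, and tracking the signs coming from the $\pm$ in the lemma together with the involution $^*$ of Lemma \ref{invol}. No identities beyond those already invoked inside Lemma \ref{lemnon0}, plus \eqref{ass} for the necessary associativity moves, should be needed, so the inductive step requires no genuinely new algebraic manipulation.
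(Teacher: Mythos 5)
Your plan is the paper's: induction on $n=\deg_xf$, Shirshov's theorem plus Lemma \ref{lemnon0} to split off one occurrence of $x$ at a time, the induction hypothesis on the inner pieces, and the congruence conditions read off by computing $g$ on both sides. Your sorting of the four shapes of Lemma \ref{lemnon0} according to the value of $g(f)$ is also correct.

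There is, however, one step you skip that does real work, and without it you prove something strictly weaker than the corollary. After reassembly the inner factors are only known to satisfy $g(p_{i,l}^j),g(h_{i',l}^j)\in\langle g(x)\rangle$: for instance the summand $xh$ contributes $xp^hx^n$, where $p^h$ is the leading coefficient delivered by the induction hypothesis and may well have degree $0$ (e.g.\ a product of two variables of the same nonzero degree). The corollary asserts $g(p_{i,l}^j)=g(h_{i',l}^j)=g(x)$ exactly, and the counting conditions $(n+i+1)g(x)=g(f)$ and $(n+i')g(x)=g(f)$ are false without that. The paper removes the degree-$0$ inner factors by an explicit extra pass using Lemma \ref{invol}(4) ($fx\equiv xf^*$ when $f_{g(x)}=0$), sliding each such factor into an adjacent block and merging the two powers of $x$ it separated (whence the exponents $n-i+2$ in the paper's proof). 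This is not mere sign bookkeeping: it changes the shape of the summands, and the same mechanism is needed to repair terms like $x\cdot(xh_{i',i'}x\cdots h_{i',1}x^{n-i'})$, which your ``prepend $x$ at the leftmost position'' step produces but which is not of the stated normal form until the resulting $x^2$ is pushed to the right. So Lemma \ref{invol} (hence \eqref{cocomuta}, \eqref{antcom}, \eqref{entdir}) enters the inductive step structurally, not only \eqref{ass}; with that addition your argument coincides with the paper's.
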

\begin{proof}
  We shall prove the corollary by induction on $n$, by Shirshov's
  theorem we may assume that all of $f$'s monomials are regular
  $r_2$-words, the initial case is just the lemma \ref{lemnon0}, which
  is already proved. Suppose that the assertion is valid for
  polynomials of degree $n$ then it is valid for polynomials of degree
  $n+1$, with effect, let $\deg_xf=n+ 1$.
  
  If $g(f)\in\langle g(x)\rangle$ then we have $f\equiv qx+xh+
  \Sigma_{j'} z_{j'}xy_{j'}$, where $g(p),g(h),$ $g(z_{j'}),g(y_{j'})
  \in \langle g(x)\rangle$, by \ref{lemnon0}. We have that $q \equiv
  p^qx^n+ \Sigma_{i=1}^n \Sigma_j p_{i,i+1}^{j,q} x\cdots
  p_{i,1}^{j,q}$ $x^{n-i} + \Sigma_{i'=1}^n \Sigma_j x h_{i',i'}^{j,q} x
  \cdots h_{i',1}^{j,q} x^{n'-i}$, by the induction hypothesis, so $qx
  \equiv  p^qx^{n+1} + \Sigma_{i=1}^n\Sigma_j p_{i,i+1}^{j,q} x \cdots
  p_{i,1}^{j,q}x^{n+1-i} + \Sigma_{i'=1}^n \Sigma_j xh_{i',i'}^{j,q} x
  \cdots h_{i',1}^{j,q} x^{n'+1-i}$. Analogously, $xh \equiv  xp^hx^n
  + \Sigma_{i=1}^n \Sigma_j xp_{i,i+1}^{j,h} x \cdots
  p_{i,1}^{j,h}x^{n-i} + \Sigma_{i'=1}^n \Sigma_j h_{i',i'}^{j,h} x
  \cdots h_{i',1}^{j,h}$ $x^{n'-i+2}$. The last summand is analogous.
  
  Clearly $f\equiv px^n+ \Sigma_{i=1}^n\Sigma_j
  p_{i,i+1}^jx\cdots p_{i,1}^jx^{n-i} +\Sigma_{i'=1}^n\Sigma_jx
  h_{i',i'}^j x\cdots h_{i',1}^j x^{n'-i}$; $g(p_{i,l}^j),
  g(h_{i',l}^j)\in\langle g(x)\rangle$. If $g(p_{i,l}^j)=0$ then
  $p_{i,i+1}^jx \cdots p_{i,1}^j x^{n-i} \equiv p_{i,i+1}^j x \cdots
  x$ $ p_{i,l-1}^j(p_{i,l}^j)^*p_{i,l+1}^jx\ldots p_{i,1}^jx^{n-i+2}$
  if $l \neq i+1$ and $p_{i,i+1}^jx\cdots p_{i,1}^jx^{n-i}\equiv x
  p_{i,i+1}^j$ $p_{i,i}^jx\cdots p_{i,1}^jx^{n-i}$ otherwise. If
  $g(h_{i',l}^j)=0$ then $x h_{i',i'}^j \cdots x h_{i',1}^jx^{n-i'}
  \equiv x$ $h_{i',i'}^j\cdots x h_{i',l-1}^j(h_{i',l}^j)^*
  h_{i',l+1}^j x \ldots h_{i',1}^jx^{n-i'+2}$. So we can assume that
  $g(p_{i,l}^j)=g(h_{i',l}^j)=g(x)$, $\forall i,i',j,l$. Calculating
  $g$ on both sides of the equivalence we obtain that $(n+i+1)g(x)
  =g(f)$ and $(n+i')g(x)=g(f)$.
  
  The case $\langle g(f),g(x)\rangle=\mathbb{Z}_2^2$ is analogous.
\end{proof}

\begin{defin}
  We shall define $v_{(1,0)}$ as $(0,1)\in\mathbf{Q}(\mu,\beta)$,
  $v_{(0,1)}$ as $(0,1)\in \mathbf{C}(\mu,\beta,\gamma)$ given by the
  Cayley-Dickson process, $v_{(1,1)}$ as $v_{(1,0)}v_{(0,1)}$ and
  $v_{(0,0)}$ as $1$. If $a\in\mathbb{O}_h$, $a=v_ha'$, $a'\in
  \mathbb{O}_0$ we define $\tilde{a}:=v_h\bar{a}$ and extend $\tilde{
  }$ to $\mathbb{O}$ by linearity. Finally we define recursively
  $a^{[n]}$ as $a^{[0]}=1$ and $a^{[n+1]}=a^{[n]}\tilde{a}$ if
  $n\equiv 0 \pmod{2}$ or $a^{[n+1]}=a^{[n]}a$ if $n\equiv 1 \pmod{2}$.
\end{defin}

\begin{prop}\label{propnon0}
  Let $f$, $x$ and $n$ be as in \ref{formanon0}, suppose that $f\in
  T_{\mathbb{Z}_2^2}(\mathbb{O})$ and $F$ is an infinite field. Then
  $p$, $\Sigma_jp_{i,i+1}^jx\cdots xp_{i,1}^j$, $\Sigma_j
  h_{i',i'}^j x\cdots  xh_{i',1}^j\in T_{\mathbb{Z}_2^2}(\mathbb{O})$
  for $i,i'=1,\ldots,n$ in the first case of \ref{formanon0} or
  $z$, $\Sigma_jz_j\cdot p_{i,i+1}^jx\cdots p_{i,1}^jx$, $\Sigma_jz_j
  \cdot h_{i',i'}^jx\cdots h_{i',1}^jx\in T_{\mathbb{Z}_2^2}(\mathbb{O})$
  for $i,i'=1,\ldots,n$ in the second case of \ref{formanon0}.
\end{prop}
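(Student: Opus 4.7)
By \ref{central} I may assume $F$ is algebraically closed, so by \ref{split} $\mathbb{O}$ is split: $\mathbb{O}_0\cong F\oplus F$ with involution $\overline{(a,b)}=(b,a)$, and $\mathbb{O}_h=v_h\mathbb{O}_0$ for each $0\neq h\in\mathbb{Z}_2^2$. Writing $h=g(x)$, the plan is to parametrize $x$ as $v_h(x_1,x_2)$ with $x_1,x_2$ ranging over $F$ as algebraically independent indeterminates. Under this substitution $f$ becomes a polynomial in $x_1,x_2$ with coefficients in $\mathbb{O}$ (depending on the remaining variables of $f$); because $F$ is infinite, every bihomogeneous coefficient of this polynomial must vanish.

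Each variable $p_{i,l}^j$, $h_{i',l}^j$, and (in the second case of \ref{formanon0}) $z_j$ of grade $h$ similarly has the form $v_h P$ for some $P\in\mathbb{O}_0$. Using the relations (A), (B), (C) of \ref{Hurwitz lemma}, one computes step by step how left-multiplication by $x$ or by such a $v_h P$ transforms the running state, which I track as a pair of polynomials in $x_1,x_2$ (the two $\mathbb{O}_0$-coordinates) scaled by a power of $v_h$: each such multiplication essentially swaps the two coordinates and scales them by a factor of $x_1$, $x_2$, or $P$. A short induction on the length of the monomial then shows that $p_{i,i+1}^j x\cdots p_{i,1}^j x^{n-i}$ evaluates to $v_h^{n+i+1}(M_{1,i,j},M_{2,i,j})$, where $M_{1,i,j}$ and $M_{2,i,j}$ are monomials in $x_1,x_2$ of bidegrees determined only by $(n,i)$ and the parity of $n-i$; analogous closed formulas hold for $px^n$ and for the $xh_{i',i'}^j x\cdots h_{i',1}^j x^{n-i'}$ terms, while the second case of \ref{formanon0} differs only by a fixed $z$- or $z_j$-prefactor that multiplies on the left via the same rules.

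The degree constraints $(n+i+1)g(x)=g(f)$ and $(n+i')g(x)=g(f)$ force $i$ and $i'$ to have fixed, opposite parities inside a given $f$. A short combinatorial check then confirms that the (bidegree, coordinate) triples $(a,b,c)$ contributed by the $px^n$-term and by the various $p$- and $h$-``slices'' are pairwise disjoint. Consequently, once $f=0$ is decomposed into its bihomogeneous components in $(x_1,x_2)$ and projected onto the two $\mathbb{O}_0$-coordinates, each non-trivial component picks up the contribution of exactly one slice; setting each such component to zero and then running the step-by-step computation in reverse recovers $p=0$, $\sum_j p_{i,i+1}^j x\cdots x p_{i,1}^j = 0$, and $\sum_j h_{i',i'}^j x\cdots x h_{i',1}^j = 0$ for every $i$ and $i'$, and likewise in the second case of \ref{formanon0}. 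The hard part will be the bookkeeping of bidegrees through each multiplication step, together with the combinatorial verification that the slice addresses are indeed all pairwise disjoint.
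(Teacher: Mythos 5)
Your plan is essentially the paper's own proof: reduce to an algebraically closed field via \ref{central}, use the split form $\mathbb{O}_0\cong F\oplus F$ with a generic substitution $x=v_h(x_1,x_2)$, and separate the slices by their bidegree in $(x_1,x_2)$ and by coordinate, exactly as the paper does with its $\tilde{x}$, $x^{[i]}$ notation and the four parity sub-cases. The ``hard part'' you defer (the bidegree bookkeeping and the pairwise-disjointness of the slice addresses) is precisely what those four sub-cases carry out, so nothing in your outline diverges from, or is missing relative to, the paper's argument.
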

\begin{proof}
  For the first case we have that under any evaluation on $\mathbb{O}$
  $f=px^n+\Sigma_i^nx^{[i]}a_ix^{n-i}+\Sigma_{i'}^n \tilde{x}^{[i']}
  b_{i'}x^{n-i'}$ where $a_i=\Sigma_jP_{i,i+1}^j\cdots P_{i,1}^j$,
  $b_{i'}=\Sigma_jH_{i',i}^j\cdots H_{i',1}^j$, $P_{i,l}^j= p_{i,l}^j$
  if $l$ is odd or $P_{i,l}^j=\tilde{p}_{i,l}^j$ if $l$ if even and
  $H_{i',l}^j=h_{i',l}^j$ if $l$ is odd or $H_{i',l}^j=\tilde{
    h}_{i',l}^j$ if $l$ is even. Now we have four sub-cases:
  \begin{itemize}
  \item $n$ is odd $g(f)=g(x)$, therefore $i$ is odd and $i'$ is even,
    then $f=x^np^*+\Sigma x^{[i]}x^{n-i}a_i+\Sigma\tilde{x}^{[i']}x^{n-i}
    b_{i'}^*$;
  \item $n$ is even $g(f)=0$, therefore $i$ is odd and $i'$ is even,
    then $f=x^np+\Sigma x^{[i]}x^{n-i}a_i+\Sigma\tilde{x}^{[i']}x^{n-i}
    b_{i'}^*$;
  \item $n$ is odd $g(f)=0$, therefore $i$ is even and $i'$ is odd,
    then $f=\tilde{x}^n\tilde{p}+\Sigma x^{[i]}\tilde{x}^{n-i}
    \tilde{a}_i+\Sigma\tilde{x}^{[i']}x^{n-i}b_{i'}$;
  \item $n$ is even $g(f)=g(x)$, therefore $i$ is even and $i'$ is
    odd, then $f=x^np+\Sigma x^{[i]}\tilde{x}^{n-i}\tilde{a}_i
    +\Sigma\tilde{x}^{[i']}x^{n-i}b_{i'}$.
  \end{itemize}
  In any case, we can use generic elements as was done in \ref{prop0}
  that results in $p=a_i=b_{i'}=0$. Therefore $0=x^{[i]}a_i= \Sigma_j
  p_{i,i+1}^jx \cdots xp_{i,1}^j$ and $0=\tilde{x}^{[i']}b_{i'}=\Sigma_j
  h_{i',i'}^j x\cdots xh_{i',1}^j$.
  
  The second case of \ref{formanon0} is analogous.
\end{proof}

\begin{prop}\label{totalnon0}
  Let $f$ be a multihomogeneous polynomial, $h\in\mathbb{Z}_2^2
  \setminus(0)$ the greatest component that $f$ depends on, $x_i$,
  $i=1,\ldots,n$ the variables from the $h$ component that $f$ depends
  on and $m_i=\deg_{x_i}f$, $m=\Sigma_{i=1}^nm_i=\deg_hf$. Suppose
  that $f\in T_{\mathbb{Z}_2^2}(\mathbb{O})\setminus I$ of minimal
  degree. Then $f$ is of one of the following forms:
  \begin{itemize}
  \item $f\equiv \Sigma_jP_1^j\cdots P_n^j p^j$;
  \item $f\equiv \Sigma_jz^j\cdot P_1^j\cdots P_n^j$;
  \end{itemize}
  where $P_i^j= p_{i,1}^jx_i\cdots p_{i,m_i}^jx_i$, $g(p_{i,l}^j)=
  g(p^j) =g(x)$, $\langle g(z^j),g(x) \rangle=\mathbb{Z}_2^2$.
\end{prop}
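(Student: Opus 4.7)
The plan is to process the variables $x_n, x_{n-1}, \ldots, x_1$ in decreasing order, extracting at each stage the desired block $P_i^j = p_{i,1}^j x_i \cdots p_{i,m_i}^j x_i$ for that variable via Corollary~\ref{formanon0}. I induct on $n$, the number of $h$-component variables of $f$. The base case $n=0$ is trivial: $f$ involves only variables of components strictly smaller than $h$, and the conclusion is vacuous. For the inductive step I apply Corollary~\ref{formanon0} with $x=x_n$, the greatest variable of $f$, writing $f$ modulo $I$ as the sum of a ``leading'' term $p\,x_n^{m_n}$ (or $z\,x_n^{m_n}$), ``middle'' pieces $\sum_j p_{i,i+1}^j x_n \cdots p_{i,1}^j x_n^{m_n-i}$ for $i=1,\ldots,m_n$, and ``right'' pieces $\sum_j x_n h_{i',i'}^j x_n \cdots h_{i',1}^j x_n^{m_n-i'}$ for $i'=1,\ldots,m_n$.

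Proposition~\ref{propnon0} asserts that each coefficient polynomial obtained by stripping the trailing $x_n$'s lies in $T_{\mathbb{Z}_2^2}(\mathbb{O})$. Those with $x_n$-degree strictly less than $m_n$ have total degree strictly smaller than $\deg f$, so by the minimality of $\deg f$ they belong to $I$ and may be discarded. Only the pieces at $i=m_n$ and $i'=m_n$ remain; the parity constraints $(m_n+i+1)\,h = g(f)$ and $(m_n+i')\,h = g(f)$ from Corollary~\ref{formanon0} (together with $2h=0$ in $\mathbb{Z}_2^2$) single out exactly one of them according to the parity of $m_n$ and the coset of $g(f)$. After re-indexing and, where necessary, applying the involution $^*$ of Lemma~\ref{invol} to convert a ``right''-headed word into a left-headed one of complementary component, the surviving contribution takes either the form $\sum_j \tilde{P}_n^j \cdot q^j$ or $\sum_j z^j \cdot \tilde{P}_n^j$, where $\tilde{P}_n^j = p_{n,1}^j x_n \cdots p_{n,m_n}^j x_n$ is precisely the desired $x_n$-block and the factors $p_{n,\ell}^j$, $q^j$, $z^j$ involve only variables strictly smaller than $x_n$.

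To finish I would expand the inner coefficients into the required form for $x_1,\ldots,x_{n-1}$. Specialising $x_n$ to generic elements as in the proofs of Propositions~\ref{prop0} and~\ref{propnon0}, the identity $f=0$ translates into a graded identity for the polynomial accompanying each $\tilde{P}_n^j$; the inductive hypothesis applied to this identity yields an expansion $q^j \equiv \sum_k P_1^k \cdots P_{n-1}^k\, p^k$ (and similarly for the $z^j$ case), which upon reassembly puts $f$ into the claimed normal form.

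The main obstacle is this last step: the inner coefficients are not themselves ``multihomogeneous polynomials in $T_{\mathbb{Z}_2^2}(\mathbb{O})\setminus I$ of minimal degree'', so the inductive hypothesis cannot be invoked directly. Overcoming this requires a careful genericity argument, in the spirit of the Cayley--Dickson specialisation used in Proposition~\ref{prop0}, that extracts from $f \in T_{\mathbb{Z}_2^2}(\mathbb{O})$ a genuine graded identity of the inner coefficient polynomials, to which minimality and induction may then be applied.
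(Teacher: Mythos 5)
Your overall skeleton (induction on the number of $h$-component variables, peeling off one variable at a time through Corollary \ref{formanon0} and Proposition \ref{propnon0}, and using the minimality of $\deg f$ to discard the pieces whose stripped versions have smaller degree) matches the paper's strategy, but you process the variables in the opposite order, and that is precisely where the gap you yourself flag at the end becomes fatal rather than merely technical. After you extract the $x_n$-block you are left with $f\equiv\Sigma_j \tilde P_n^j\cdot q^j$ in which every surviving term has the \emph{same} multidegree in $x_n$. The generic-specialisation trick of Propositions \ref{prop0} and \ref{propnon0} separates coefficients only because they sit next to \emph{distinct} powers such as $x^{[i]}x^{n-i}$, which become linearly independent once $x=(x_1,x_2)$ is taken with $x_1,x_2$ algebraically independent; once all the trailing powers coincide there is nothing left to vary independently, and specialising $x_n$ generically just returns the statement $f=0$. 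It does not yield that the individual $q^j$ (or any useful regrouping of them) are graded identities, so the ``careful genericity argument'' you defer to does not exist in the form you describe, and the inductive hypothesis cannot be fed the inner coefficients.

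The paper avoids this by reversing the order of processing: the induction hypothesis is applied to $f$ itself, ``ignoring'' the newest variable $x_{n+1}$, so the object carrying the hypotheses ($f\in T_{\mathbb{Z}_2^2}(\mathbb{O})\setminus I$ of minimal degree) never changes. The occurrences of $x_{n+1}$ then lie inside the coefficient monomials $p^j$, $z^j$ and are normalised by Lemma \ref{lemnon0}, which is a purely monomial-level rewriting modulo $I$ requiring no identity or minimality hypothesis, followed by induction on $m_{n+1}=\deg_{x_{n+1}}f$ and the substitution arguments applied again to the whole of $f$. The repair for your version is the same observation: Corollary \ref{formanon0} and Lemma \ref{lemnon0} hold for arbitrary multihomogeneous polynomials and monomials, so you may normalise the inner coefficients with respect to $x_{n-1},\ldots,x_1$ without first proving they are identities, reserving Proposition \ref{propnon0} and minimality for the full polynomial $f$ at each stage.
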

\begin{proof}
  We will prove the proposition by induction on $n$, the initial case,
  $n=1$, is just \ref{propnon0}. Suppose that the proposition's
  assertion is true up to $n$, then  it is true for $n+1$, with
  effect:
  
  Ignoring $x_{n+1}$ we obtain that $f$ is either $f\equiv
  \Sigma_jP_1^j\cdots P_n^j p^j$ or $f\equiv \Sigma_jz^j\cdot
  P_1^j\cdots P_n^j$. If an $x_{n+1}$ appear in a $p$ then we have
  that $p\equiv p_1x_{n+1}$ or $p\equiv p_1x_{m+1}p_2$ where $g(p_1)=
  g(p_2)= g(x_{n+1})$ by \ref{lemnon0}. Using the same substitution
  arguments we may assume, without loss of generality, that $p\equiv
  p_1x_{n+1} p_2$. If an $x_{n+1}$ appear in a $z$ then we have that
  $p\equiv z_1x_{n+1}$ or $p\equiv z_1\cdot p_1x_{n+1}$ where $g(p_1)=
  g(x_{n+1})$ and $\langle g(z_1),g(x_{n+1})\rangle=\mathbb{Z}_2^2$,
  by \ref{lemnon0}. Using the same substitution arguments we may
  assume, without loss of generality, that $p\equiv z_1\cdot p_1
  x_{n+1}$. The proposition now follows from induction in $m_{n+1}$,
  the degree of $f$ with respect to $x_{n+1}$.
\end{proof}

\section{Coup de Gr\^ace}

\begin{rem}\label{rem}
  Let $u$ be a monomial that depends only on two components, both of
  them non-zero. Then $u\equiv\pm wv$ where $w$ is a monomial that
  depends only on one component and $v$ is a monomial that depends
  only on the other component.
\end{rem}
\begin{proof}
  A simple proof by induction on the degree of the monomial.
\end{proof}

\begin{lem}\label{lemcompnon0}
  Let $f$ be a multihomogeneos polynomial, suppose that $f\in
  T_{\mathbb{Z}_2^2}(\mathbb{O})\setminus I$ of minimal degree and $F$
  is an infinite field. Then $f$ depends on a zero component
  variable.
\end{lem}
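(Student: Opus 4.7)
The plan is to argue by contradiction. Assume that $f \in T_{\mathbb{Z}_2^2}(\mathbb{O}) \setminus I$ is multihomogeneous of minimal degree and depends on no zero-component variable, and derive either that $f$ already lies in $I$ or that some polynomial of strictly smaller degree witnesses the same failure, contradicting the choice of $f$.

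First, I would let $h \in \mathbb{Z}_2^2 \setminus \{0\}$ be the greatest non-zero component that $f$ depends on and apply Proposition \ref{totalnon0} to put $f$, modulo $I$, into one of the two canonical shapes $\sum_j P_1^j \cdots P_n^j p^j$ or $\sum_j z^j \cdot P_1^j \cdots P_n^j$. Since by hypothesis no variable of $f$ has component zero, each inner factor $p_{i,l}^j$, $p^j$, or $z^j$ is itself a polynomial in non-zero component variables. I would then iterate: apply Proposition \ref{totalnon0} to each such inner factor with respect to its own greatest remaining non-zero component, and invoke Remark \ref{rem} whenever a factor depends on exactly two non-zero components to split it as $\pm w v$ with $w, v$ each of a single component. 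This yields, modulo $I$, a canonical decomposition of $f$ as a sum of monomials that split cleanly into blocks, each block depending only on variables of one fixed non-zero component of $\mathbb{Z}_2^2$.

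Next, I would use the Cayley-Dickson presentation $\mathbb{O}_h = v_h \mathbb{O}_0$ and evaluate $f$ at substitutions $x \mapsto v_{g(x)} a_x$ with $a_x \in \mathbb{O}_0$. By Lemma \ref{Hurwitz lemma}, and especially relation (C), each product of elements taken from the non-zero components collapses, at the cost of scalars and conjugation, onto a single leading factor $v_{g(f)}$ multiplied by a word in the $a_x$'s and $\bar a_x$'s inside the commutative associative algebra $\mathbb{O}_0 \cong F \oplus F$ (split, which we may assume via Proposition \ref{central}). The identity $f = 0$ then forces this resulting polynomial to vanish on $\mathbb{O}_0$, and since $F$ is infinite I may specialize the $a_x$'s to generic elements and exploit the involution on $\mathbb{O}_0$ to extract relations on the scalar coefficients of the canonical monomials obtained in the first step.

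Finally, I would verify that these relations either force every coefficient to be zero -- so that $f \in I$, contradicting $f \notin I$ -- or expose a strictly lower-degree polynomial in $T_{\mathbb{Z}_2^2}(\mathbb{O}) \setminus I$, again contradicting minimality. The main obstacle will be the second step: carefully bookkeeping the iterated Hurwitz collapses for products $v_{h_1} a_1 \cdot v_{h_2} a_2 \cdots$ across distinct non-zero components, and checking that distinct canonical monomials from the first step remain linearly independent after the collapse, so that the vanishing of the total evaluation genuinely forces the scalar coefficients to vanish.
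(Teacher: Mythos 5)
Your opening move matches the paper's: put $f$ into the canonical form of Proposition \ref{totalnon0} with respect to the non-zero component $h$ of maximal degree. But from there the paper finishes in one line with a purely combinatorial degree count that you miss, and the evaluation route you substitute for it has real holes. The paper's point is this: since $f$ has no zero-component variables and all occurrences of $h$-component variables are accounted for by the $x_i$'s inside the blocks $P_i^j$, each coefficient monomial $p_{i,l}^j$ (and $p^j$, resp. $z^j$) is a product of variables drawn only from the \emph{other two} non-zero components $h_1,h_2$ (with $h_1+h_2=h$). A monomial of degree $h$ in such variables must contain an odd number, hence at least one, variable from each of $h_1$ and $h_2$. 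There are $\deg_h f$ such factors plus one more ($p^j$ or $z^j$), so $\deg_{h_1} f > \deg_h f$, contradicting the maximality of $\deg_h f$. No evaluation, no Cayley--Dickson collapse, no linear independence claim is needed.

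The gap in your plan is the final step. Suppose your Hurwitz-collapse evaluation does force certain coefficient combinations to vanish on $\mathbb{O}$: that only places those combinations in $T_{\mathbb{Z}_2^2}(\mathbb{O})$, not in $I$, and they have the \emph{same} multidegree as $f$ (the canonical form of \ref{totalnon0} does not lower degree), so minimality of $f$ buys you nothing and you cannot conclude $f\in I$ without circularity -- indeed, what you are attempting is essentially Proposition \ref{sem-homegeneo} restricted to zero-free polynomials, which is the hard theorem this lemma is a stepping stone toward. Moreover the linear independence of the collapsed monomials, which you correctly flag as the main obstacle, is genuinely doubtful: distinct canonical monomials can collapse to proportional words in $\mathbb{O}_0\cong F\oplus F$, so the vanishing of the total evaluation need not force individual coefficients to vanish. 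Finally, note that the statement to be contradicted is only ``$f$ depends on no zero-component variable,'' and your argument never engages with that hypothesis beyond the first step; the contradiction has to come from the degree bookkeeping that this hypothesis makes possible, which is exactly what the paper exploits.
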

\begin{proof}
  Assume, by contradiction, that $f$ does not depend on the zero
  component variable. Let $h\in G\setminus(0)$ s.t. $\forall y\in G
  \setminus(0)$, $\deg_hf\geq\deg_yf$. Then $f\equiv \Sigma_jP_1^j
  \cdots P_n^j p^j$ or $f\equiv \Sigma_jz^j\cdot P_1^j\cdots P_n^j$,
  where $P_i^j= p_{i,1}^jx_i\cdots p_{i,m_i}^jx_i$, $g(p_{i,l}^j)=
  g(p^j) =h$, $\langle g(z^j),g(x_i) \rangle=\mathbb{Z}_2^2$,
  $g(x_i)=h$, $i=1,\ldots,m$, $\langle 
  g(z^j),h\rangle=\mathbb{Z}_2^2$ and $m=\Sigma_{i=1}^nm_i=\deg_hf$,
  by \ref{totalnon0}.
  
  Furthermore we have that $p_{i,l}^j \equiv \alpha_{i,l}^j z_{i,l}^j
  w_{i,l}^j$ and $z^j \equiv \alpha^{j'} z^{j'} w_l^{j'}$  where
  $\alpha_{i,l}^j$, $\alpha^{j'} \in F$ and all the $z_{i,l}^j$'s and
  $z^{j'}$'s are product of variables from the same non zero component
  that is not $h$ and the $w_{i,l}^j$'s are products of variables from
  the third non zero component. Therefore $deg_{g(z)}f>deg_hf$ which
  is a contradiction.
\end{proof}

\begin{prop}\label{sem-homegeneo}
  Let $f$ be a multihomogeneous polynomial, assume that $f\in
  T_{\mathbb{Z}_2^2}(\mathbb{O})$ and $F$ is an infinite field. Then
  $f\in I$.
\end{prop}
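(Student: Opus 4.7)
My plan is to argue by contradiction. Suppose $I \subsetneq T_{\mathbb{Z}_2^2}(\mathbb{O})$; then by Proposition \ref{central} (and because the generators of $I$ are multilinear, making $I$ multihomogeneous as well) one can pick $f \in T_{\mathbb{Z}_2^2}(\mathbb{O}) \setminus I$ multihomogeneous of minimal total degree. Proposition \ref{comp0sotem1} gives $\deg_0 f \le 1$, while Lemma \ref{lemcompnon0} forces $f$ to depend on some zero-component variable; hence $\deg_0 f = 1$. Let $x$ denote the unique zero-component variable of $f$.

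Applying Corollary \ref{forma0} I split on $g(f)$. In the easy subcase $g(f) \ne 0$, the normal form is $f \equiv y_0 x + x y_1$ with $\deg y_i < \deg f$; Proposition \ref{prop0} gives $y_1 \in T_{\mathbb{Z}_2^2}(\mathbb{O})$, hence $y_1 \in I$ by minimality; substituting $x \mapsto 1$ in $f$ then shows $y_0 \in T_{\mathbb{Z}_2^2}(\mathbb{O})$, so $y_0 \in I$, and therefore $f \in I$ by the ideal property, a contradiction. In the subcase $g(f) = 0$ the normal form is $f \equiv \sum_j y_{0,j}(z_{0,j}x) + \sum_j (y_{1,j}x) z_{1,j}$ with $g(y_{i,j}) = g(z_{i,j}) \ne 0$; Proposition \ref{prop0} together with taking differences puts both partial sums in $T_{\mathbb{Z}_2^2}(\mathbb{O})$, and substituting $x \mapsto 1$ produces smaller identities $\sum_j y_{0,j}z_{0,j},\ \sum_j y_{1,j}z_{1,j} \in T_{\mathbb{Z}_2^2}(\mathbb{O})$ of degree $\deg f - 1$, which lie in $I$ by minimality. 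The first piece $\sum_j y_{0,j}(z_{0,j}x)$ is then immediate: \eqref{ass} (the grading triple $(g(y_{0,j}), g(z_{0,j}), 0)$ spans an order-$2$ subgroup) rewrites it as $\bigl(\sum_j y_{0,j} z_{0,j}\bigr) x \in I$.

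The main technical obstacle is the remaining piece $\sum_j (y_{1,j}x) z_{1,j}$: naive left- or right-multiplication of $\sum_j y_{1,j} z_{1,j} \in I$ by $x$ followed by reassociation only produces $\sum_j (xy_{1,j}) z_{1,j}$ or $\sum_j y_{1,j}(z_{1,j}x)$, neither of which is congruent modulo $I$ to $\sum_j (y_{1,j}x) z_{1,j}$ in general (there is no graded commutator $[y_{1,j}, x]\in I$ when $g(x)=0\ne g(y_{1,j})$). To bridge the gap I would multilinearize (valid by multihomogeneity), so that each $y_{1,j}$ becomes a monomial with a well-defined rightmost leaf variable $a_j^*$ appearing only in that particular $y_{1,j}$. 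The grading-preserving endomorphism $\sigma$ of the free graded algebra defined by $\sigma(a_j^*) := a_j^* x$ for every $j$ (and the identity on all other variables) is a legitimate substitution. An induction on the depth of a monomial $w$ with rightmost leaf $a$ shows $w|_{a \to a x} \equiv w \cdot x \pmod I$: the inductive step $u(vx) \equiv (uv) x$ is covered by \eqref{ass} when $\langle g(u), g(v)\rangle$ has order at most $2$ (so $g(u) = g(v)$ or one of them vanishes), and by \eqref{entdir} --- with $u, v, x$ playing the roles of $v, b, a$ in that identity, whose hypothesis $g(u) \notin \langle 0, g(v)\rangle$ is exactly the remaining case ($g(u) \ne 0$ and $g(u) \ne g(v)$) --- otherwise. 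Applying $\sigma$ to $\sum_j y_{1,j} z_{1,j} \in I$ thus yields $\sum_j (y_{1,j} x) z_{1,j} \in I$, and combining with the first piece gives $f \in I$, the desired contradiction.
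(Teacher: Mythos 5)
Your opening moves coincide with the paper's: minimal counterexample, $\deg_0 f=1$ via \ref{comp0sotem1} and \ref{lemcompnon0}, the normal form of \ref{forma0}/\ref{prop0}, and the disposal of the case $g(f)\neq 0$ and of the piece $\sum_j y_{0,j}(z_{0,j}x)$ are all fine. The gap sits precisely at the step you yourself flag as the main obstacle. The congruence $u(vx)\equiv (uv)x \pmod I$ in the case $\langle g(u),g(v)\rangle=\mathbb{Z}_2^2$, $g(x)=0$ is false: it is not even an identity of $\mathbb{O}$, so it cannot follow from $I\subseteq T_{\mathbb{Z}_2^2}(\mathbb{O})$. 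Identity \eqref{entdir} reads $vb\cdot a=v\cdot ab$ --- note the order $ab$, not $ba$ --- so with $u,v,x$ in the roles of $v,b,a$ it yields $(uv)x\equiv u(xv)$, whereas your induction hands you $u(vx)$; and $xv\neq vx$ when $g(x)=0\neq g(v)$, since writing $v=v_hv'$ with $v'\in\mathbb{O}_0$, Lemma \ref{Hurwitz lemma}(B) gives $vx=v_h(xv')$ but $xv=v_h(\bar{x}v')$, which differ whenever $x\neq\bar{x}$. Concretely, already for $w=ua$ of degree two with $g(u)=(1,0)$ and $g(a)=(0,1)$, your endomorphism produces $u(ax)$ while $wx=(ua)x\equiv u(xa)$, and the associator $(u,a,x)$ is genuinely nonzero (in the refined $\mathbb{Z}_2^3$ grading the three components generate the whole group, where \eqref{anti-associativa} forces anti-associativity rather than associativity). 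So the bridging lemma $w|_{a\to ax}\equiv wx$ fails at its first nontrivial instance, and applying $\sigma$ to $\sum_j y_{1,j}z_{1,j}\in I$ does not produce $\sum_j(y_{1,j}x)z_{1,j}$.

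The underlying obstruction is that moving a zero-component element past a nonzero-component one conjugates it, and $\bar{x}$ is not a graded polynomial in $x$; semantically $\sum_j(y_{1,j}x)z_{1,j}$ evaluates to $\bar{x}\cdot\sum_j y_{1,j}z_{1,j}$, not to $x\cdot\sum_j y_{1,j}z_{1,j}$, so knowing that $\sum_j y_{1,j}z_{1,j}\in I$ is not by itself enough. This is exactly why the paper does not stop there: it runs a second normal-form pass on the $y$'s and $z$'s with respect to a distinguished nonzero-component variable $w$ (via \ref{formanon0}, \eqref{com}, \eqref{ass}, \eqref{cocomuta2}, \eqref{entdir}, \eqref{lokao2}, \eqref{antcom}) and then a substitution-plus-counting argument that isolates summands of the form $(\cdots)x$ whose parenthesized factors are identities of smaller degree. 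You would need a replacement for the rightmost-leaf substitution that keeps track of these conjugations; as written, the final step does not go through.
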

\begin{proof}
  Assume, by contradiction, that $f\notin I$ so we can assume without
  loss of generality that $f$ is of minimal degree.
  
  We have that $\deg_0f=1$ by \ref{comp0sotem1} and \ref{lemcompnon0},
  so $f\equiv\Sigma_iy_ix\cdot z_i$ where $g(x)=0\neq g(y_i)=g(z_i)$
  and the $y$'s, $z$'s are free from the zero component by
  \ref{prop0}. Let $h \in \mathbb{Z}_2^2 \setminus (0)$ such that
  $\forall \alpha \in \mathbb{Z}_2^2 \setminus (0)$, $\deg_hf \geq
  \deg_{\alpha}f$ and $w$ a variable s.t. $g(w)=1$, $\deg_wf>0$.
  Applying \ref{formanon0}, \eqref{com}, \eqref{ass},
  \eqref{cocomuta2}, \eqref{entdir}, \eqref{lokao2} and \eqref{antcom}
  on the $y$'s and $z$'s we obtain that $f$ is of the form:
  \begin{align*}
    f\equiv & wx \Sigma_{i=1}^{n-1} \Sigma_j p_{i,i+1}^j w \cdots
    p_{i,1}^j w^{n-i-1} + p_1x \Sigma_{i'=1}^n \Sigma_j w p_{i',i'}^j
    w \cdots p_{i',1}^j w^{n'-i} + \\
    & wxk_1w^{n-1} + p_1xk_2w^n\\
  \end{align*}
  where the $p$'s are from the $h$ component and the $k$'s are either
  zero or from the $h$ component.
  
  Applying the usual substitution argument and the counting argument
  from \ref{formanon0}  we obtain that $\Sigma_j p_{i,i+1}^j w \cdots
  p_{i,1}^j$, $p_1x \Sigma_{i'=1}^n \Sigma_j w p_{i',i'}^j w \cdots
  p_{i',1}^j$, $k_1$ and $p_1xk_2$ are identities therefore in
  $I$. We may assume, whiteout loss of generality, that $f \equiv
  \Sigma_jp_j x w p_{n-1,n-1}^j \cdots w p_{n-1,1}^j \equiv
  (\Sigma_jp_j w p_{n-1,n-1}^j \cdots w p_{n-1,1}^j) x$.  Which is a
  contradiction.
\end{proof}

\begin{theo}
  If $F$ is an infinite field, then $I=T_{\mathbb{Z}_2^2}(\mathbb{O})$.
\end{theo}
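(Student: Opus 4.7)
The plan is to establish the two inclusions $I \subseteq T_{\mathbb{Z}_2^2}(\mathbb{O})$ and $T_{\mathbb{Z}_2^2}(\mathbb{O}) \subseteq I$ separately. For the first, I would just remark that each of the defining identities \eqref{cocomuta}--\eqref{com} has already been justified as an honest identity of $\mathbb{O}$ earlier in the paper via Lemma \ref{Hurwitz lemma}: for each scheme one takes the subalgebra $B = \bigoplus_{h \in H} \mathbb{O}_h$ generated by the appropriate proper subgroup $H \leq \mathbb{Z}_2^2$ and an element $v \in B^{\perp}$ whose degree lies outside $H$, so the relations (A), (B), (C) translate into the listed graded identities. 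Hence $I \subseteq T_{\mathbb{Z}_2^2}(\mathbb{O})$ is immediate.

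For the nontrivial direction, I would invoke Proposition \ref{central}: since $F$ is an infinite field and $\mathbb{O}$ is free (hence torsion-free) as an $F$-module, the T-ideal $T_{\mathbb{Z}_2^2}(\mathbb{O})$ is multihomogeneous. Consequently any $f \in T_{\mathbb{Z}_2^2}(\mathbb{O})$ decomposes as a sum of multihomogeneous identities of $\mathbb{O}$, and it is enough to prove that each such summand lies in $I$. This is exactly the content of Proposition \ref{sem-homegeneo}, so the inclusion follows at once.

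The hard part is entirely shouldered by the preceding section: the reduction of regular $r_2$-words to a short list of normal forms (Lemmas \ref{lem0} and \ref{lemnon0}), the promotion of those lists to multihomogeneous polynomials (Corollaries \ref{forma0} and \ref{formanon0}), the separation of components by plugging generic elements of $\mathbb{O}_0 \cong F \oplus F$ into the largest variable (Propositions \ref{prop0} and \ref{propnon0}), and the minimal-counterexample arguments that force $\deg_0 f \leq 1$ (Proposition \ref{comp0sotem1}) while also forcing at least one zero-component variable to appear (Lemma \ref{lemcompnon0}), culminating in Proposition \ref{sem-homegeneo}. Given all of that machinery, the theorem itself is essentially a one-line assembly step with no remaining obstacle beyond citing Propositions \ref{central} and \ref{sem-homegeneo}.
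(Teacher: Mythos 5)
Your proposal is correct and matches the paper's (implicit) argument exactly: the paper leaves this theorem without a written proof precisely because it is the one-line assembly you describe, namely that the generators of $I$ are identities of $\mathbb{O}$ by Lemma \ref{Hurwitz lemma}, and conversely any identity splits into multihomogeneous components by Proposition \ref{central}, each of which lies in $I$ by Proposition \ref{sem-homegeneo}.
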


\begin{theo}
  Let $D$ be an infinite domain, and form the ``Cayley-Dickson''
  algebra over $D$, $\mathbb{O}$. Then $I=T_{\mathbb{Z}_2^2}
  (\mathbb{O})$.
\end{theo}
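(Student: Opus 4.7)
The plan follows the same pattern as the corollary immediately after Proposition~\ref{z2a3}: the theorem is a direct application of Proposition~\ref{central}. Let $K$ be the field of fractions of $D$. The Cayley-Dickson algebra $\mathbb{O}$ constructed over $D$ by three iterations of the doubling process is free of rank $8$ as a $D$-module (and hence torsion free), with its $\mathbb{Z}_2^2$-grading obtained component-wise from that process. Proposition~\ref{central} therefore yields the equality $T_{\mathbb{Z}_2^2}(\mathbb{O}) = T_{\mathbb{Z}_2^2}(\mathbb{O} \otimes_D K)$, reducing the problem to a statement over the field of fractions.

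Next, I would identify $\mathbb{O}\otimes_D K$ with the Cayley-Dickson algebra over $K$ carrying the same $\mathbb{Z}_2^2$-grading, since both the iterated doubling and the assignment of homogeneous components are compatible with extension of scalars. Because $K$ is an infinite field, the preceding theorem identifies $T_{\mathbb{Z}_2^2}(\mathbb{O}\otimes_D K)$ with $I_K$, the $T_{\mathbb{Z}_2^2}$-ideal generated over $K$ by the identities \eqref{cocomuta}--\eqref{com}.

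To conclude one must observe that the $T$-ideal $I$ over $D$ coincides with $I_K \cap D_G\{X\}$. Since the generators \eqref{cocomuta}--\eqref{com} carry $\pm 1$ coefficients, every ``simple consequence'' of a generator already lives in $D_G\{X\}$, so $I$ embeds into $I_K$ under $D\hookrightarrow K$. Conversely, any $f\in T_{\mathbb{Z}_2^2}(\mathbb{O})\subseteq I_K$ is multihomogeneous by Proposition~\ref{central}, and on each multihomogeneous slice the $D$-span of consequences is saturated inside the $K$-span, so $f$ is already a $D$-combination of consequences, i.e.\ $f\in I$. I do not expect a real obstacle here: the only potentially delicate point is this coefficient descent from $K$ to $D$, and it is routine given the multihomogeneity and torsion-freeness already furnished by Proposition~\ref{central}. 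Thus, combining the two identifications, $I = T_{\mathbb{Z}_2^2}(\mathbb{O})$.
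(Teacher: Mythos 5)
Your proposal is correct and follows the same route as the paper: the paper's entire proof is ``a direct application of Proposition~\ref{central}'', i.e.\ pass to the field of fractions $K$ using freeness and torsion-freeness of $\mathbb{O}$ over $D$, invoke the infinite-field theorem over $K$, and descend. You merely spell out the coefficient-descent step from $I_K$ back to $I$ that the paper leaves implicit, which is harmless since the generating identities and all reductions carry coefficients $\pm1$.
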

\begin{proof}
  A direct application of \ref{central}
\end{proof}

Remembering that two split composition algebra of the same dimension
are isomorphic we see that $M_2(F) \cong \mathbf{Q}(0,1)$. If one
pushes the $\mathbb{Z}_2$ grading over the isomorphism he gets that
the zero component is formed by the diagonal matrices, and the unitary
component by the anti-diagonal matrices. More generally, let
$M_n(F)_\alpha := $lin.span$ \{  e_{i,j} | j-i \equiv \alpha
\pmod{n} \}$. For that we have the following:

\begin{theo}
  Let $F$ be a field of characteristic zero. Then $T_{\mathbb{Z}_2}
  (M_n(F))$ is generated as a $T$-ideal by associativity and the
  following identities:
  \begin{align}
    xy-yx=0 \qquad& g(x)=g(y)=0;\label{M_n1}\\
    x_1xx_2-x_2xx_1=0 \qquad& g(x_2)=g(x_1)=-g(x).\label{M_n2}
  \end{align}
\end{theo}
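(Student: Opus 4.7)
The plan mirrors the paper's earlier treatment of $T_{\mathbb{Z}_2^2}(\mathbb{O})$: verify that \eqref{M_n1} and \eqref{M_n2} hold in $M_n(F)$ under the stated grading, then show the $T$-ideal $J$ generated by them together with associativity exhausts $T_{\mathbb{Z}_2}(M_n(F))$ by reducing a minimal-degree multihomogeneous counterexample to contradiction. By Proposition \ref{central} the restriction to multihomogeneous $f$ is harmless.

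I would first verify the identities on $M_n(F)$. Under the given grading the zero component is spanned by the diagonal matrix units $e_{ii}$ and is therefore commutative, yielding \eqref{M_n1}. For \eqref{M_n2}, a direct calculation on basis elements $e_{ij}$ satisfying the prescribed degree constraints shows that $x_1 x$ and $x x_2$ both lie in the zero component and that the products $x_1 x x_2$ and $x_2 x x_1$ agree entry by entry.

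The next step is a reduction to normal form modulo $J$. Associativity reduces every monomial to an ordered juxtaposition $u_1 u_2 \cdots u_m$; \eqref{M_n1} lets me transpose adjacent zero-component variables; \eqref{M_n2} performs the ``outer swap'' $u_{i-1} u_i u_{i+1} \leftrightarrow u_{i+1} u_i u_{i-1}$ whenever all three are non-zero-component. Fixing a Shirshov-style order on the variables and inducting on $m$, every monomial becomes equivalent modulo $J$ to one in a prescribed canonical arrangement of the zero- and non-zero-component variables, analogous to the role played by regular $r_2$-words in the paper's earlier sections.

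The main obstacle is the faithfulness step. Given a minimal-degree $f \in T_{\mathbb{Z}_2}(M_n(F)) \setminus J$ written as a sum of distinct normal-form monomials, one must exhibit matrix-unit substitutions that separate its coefficients. In analogy with Corollaries \ref{forma0} and \ref{formanon0}, I would split into cases by the greatest variable of $f$ and, for each normal-form monomial in its support, assign specific matrix units $e_{ij}$ to the variables so that the resulting product in $M_n(F)$ is a single matrix unit distinct from those arising from the other monomials; this forces each coefficient to vanish, contradicting $f \notin J$ and completing the proof.
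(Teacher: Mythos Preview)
The paper does not prove this theorem. It is quoted as a known result, attributed to Di Vincenzo (\cite{vinc}) for $n=2$ and to Vasilovsky (\cite{vasi}) for arbitrary $n$. What the paper \emph{does} prove is only the companion theorem for $M_2(D)$ over an infinite domain, and its argument is entirely different from your sketch: it uses the isomorphism $M_2(F)\cong\mathbf{Q}(0,1)$ to view $M_2$ as the quaternion layer of $\mathbb{O}$, restricts the already-established generators \eqref{cocomuta}--\eqref{com} of $T_{\mathbb{Z}_2^2}(\mathbb{O})$ to the $\mathbb{Z}_2$-subgrading, and then checks by hand that each surviving restricted identity is a consequence of associativity together with \eqref{M_n1} and \eqref{M_n2}. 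There is no normal-form reduction and no matrix-unit separation argument; the heavy lifting was done in Proposition~\ref{sem-homegeneo}.

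Your proposal is thus a genuinely different route---essentially a sketch of Vasilovsky's original direct proof rather than the paper's deduction from the octonion result---and it only targets the general-$n$ statement, which the paper never attempts. One concrete slip: you write that \eqref{M_n2} licenses the swap $u_{i-1}u_iu_{i+1}\leftrightarrow u_{i+1}u_iu_{i-1}$ ``whenever all three are non-zero-component.'' The hypothesis of \eqref{M_n2} is $g(x_1)=g(x_2)=-g(x)$, which for the $\mathbb{Z}_n$-grading defined just above the theorem (the paper's ``$\mathbb{Z}_2$'' is evidently a misprint for $\mathbb{Z}_n$) is a much stronger constraint once $n>2$. Your normal-form step as stated would therefore not go through for general $n$; producing a genuine canonical form modulo $J$ requires more care with the degree arithmetic, and this is where the real work in Vasilovsky's argument lies.
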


This theorem was first proved by Di Vincenzo in \cite{vinc} for two by
two matrices, latter Vasilovsky extended the proof for matrices of any
order in \cite{vasi}. Afterwards Koshlukov and Azevedo proved the
theorem for two by two matrices over an infinite field of
characteristic grater then two in \cite{kosh1}. Finally in
\cite{kosh2} Brand{\~a}o, Koshlukov and Krasilnikov remarked that the
proof in \cite{kosh1} is still valid for an infinite integral domain,
that is:

\begin{theo}
  Let $D$ be an infinite domain. Then $T_{\mathbb{Z}_2} (M_2(D))$ is
  generated as an $T$-ideal by the identities \eqref{M_n1},
  \eqref{M_n2} and associativity.
\end{theo}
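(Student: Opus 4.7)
The plan is to reduce the statement over the infinite domain $D$ to the analogous statement over a field by scalar extension, and then to invoke the previously cited theorem of Di Vincenzo, Vasilovsky, and Koshlukov--Azevedo. The vehicle for the reduction is Proposition~\ref{central}, and the argument will be nearly immediate once its hypotheses are lined up.

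Concretely, I would let $K$ denote the field of fractions of $D$. Since $D$ is infinite, $K$ is an infinite field of the same characteristic and is an extension domain of $D$ in the sense of Proposition~\ref{central}. The algebra $M_2(D)$ is free of rank four as a $D$-module, with homogeneous basis $\{e_{1,1},e_{2,2},e_{1,2},e_{2,1}\}$: the first two matrix units have $\mathbb{Z}_2$-degree $0$ and the last two have $\mathbb{Z}_2$-degree $1$, according to the convention $M_n(F)_\alpha = \mathrm{lin.span}\{e_{i,j}\mid j-i\equiv\alpha\pmod n\}$ introduced just before the preceding theorem. Because this basis is homogeneous, the natural isomorphism
\[
M_2(D)\otimes_D K \;\cong\; M_2(K)
\]
is in fact an isomorphism of $\mathbb{Z}_2$-graded $D$-algebras.

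With these observations the hypotheses of Proposition~\ref{central} are satisfied, and that proposition yields $T_{\mathbb{Z}_2}(M_2(D))=T_{\mathbb{Z}_2}(M_2(K))$. By the preceding theorem the right-hand side is generated, as a $T$-ideal, by associativity together with the identities \eqref{M_n1} and \eqref{M_n2}; hence the same generators suffice for $T_{\mathbb{Z}_2}(M_2(D))$, which is exactly what we want.

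There is essentially no serious obstacle in this argument, which is why the author disposes of it in a single line. The deep content of the theorem is entirely concentrated in the previously cited field-theoretic result; the only point one has to verify for the present extension is the compatibility of the $\mathbb{Z}_2$-grading with scalar extension, and this is immediate from the fact that the matrix units $e_{i,j}$ are already $\mathbb{Z}_2$-homogeneous with respect to the prescribed grading.
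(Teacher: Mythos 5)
Your argument and the paper's proof have essentially nothing in common. The paper has already attributed this exact statement to Brand\~ao, Koshlukov and Krasilnikov in the sentence preceding it; the theorem is then restated under the words ``Which we now re-obtain,'' and the entire point of the proof is to derive it \emph{independently} of the cited matrix literature, as a corollary of the paper's own main result that $I=T_{\mathbb{Z}_2^2}(\mathbb{O})$ over an infinite domain. Concretely, $M_2(D)\cong\mathbf{Q}(0,1)$ is the sum of two of the four $\mathbb{Z}_2^2$-components of $\mathbb{O}$, so its $\mathbb{Z}_2$-graded identities are obtained by restricting the generators \eqref{cocomuta}--\eqref{com} of $I$ to variables graded by the subgroup $\mathbb{Z}_2\subset\mathbb{Z}_2^2$. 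The paper observes that \eqref{antcom}, \eqref{entesq0?}, \eqref{lokao2} and \eqref{idnova} have empty restriction, that \eqref{ass}, \eqref{com} and \eqref{cocomuta} restrict precisely to associativity, \eqref{M_n1} and \eqref{M_n2}, and then checks by three short substitutions that the restrictions of \eqref{cocomuta2}, \eqref{cocomuta3} and \eqref{entdir} are consequences of these. Your proof instead deduces the theorem from the very results the section is offering an alternative route to, so even where it works it re-proves a citation rather than re-obtaining anything from the octonions.

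Taken on its own terms, your argument also has a concrete gap. The ``preceding theorem'' you invoke for $M_2(K)$ is stated only for fields of characteristic zero, and the Koshlukov--Azevedo extension described in the surrounding text covers infinite fields of characteristic greater than two. The statement you are proving carries no characteristic hypothesis, and the fraction field of an infinite domain can perfectly well have characteristic two (take $D=\mathbb{F}_2[t]$, so $K=\mathbb{F}_2(t)$); that case is reached by none of the results you cite, whereas the paper's derivation rests on its octonion theorem, which is stated without a characteristic restriction. A secondary worry: Proposition~\ref{central} identifies $T_{\mathbb{Z}_2}(M_2(D))$ and $T_{\mathbb{Z}_2}(M_2(K))$ as sets of identities with coefficients in $D$; to conclude that the former is generated \emph{as a $T$-ideal over $D$} by the three stated identities, you must still clear the $K$-denominators that appear when such an identity is expressed as a consequence of the generators over $K$, and your proposal does not address this. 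The paper is admittedly loose about this same point in its other ``direct application of \ref{central}'' corollaries, so it is a shared weakness, but the characteristic-two hole is specific to your reduction.
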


Which we now re-obtain:
\begin{proof}
  Let's write the identities that generate $I$, but restricted to
  $\mathbb{Z}_2$:
  \begin{align*}
  ab\cdot v=v\cdot ba, & \qquad
  g(v)= g(a)= g(b) =\bar{1}; \tag{5*}\\
  (ax\cdot b)v=v(ba\cdot x), & \qquad
  g(x)= 0,\ g(a)= g(b)= g(v)= \bar{1}; \tag{6*} \\
  v(ax\cdot b)=(ba\cdot x)v, & \qquad
  g(x)= 0,\ g(a)= g(b)= g(v)= \bar{1}; \tag{7*} \\
  vb\cdot a=v\cdot ab, & \qquad
  g(v)= \bar{1},\ g(a)= g(b) =0; \tag{9*} \\
  (x,y,z)=0, & \qquad
  \tag {13*} \\
  [x,y]=0, & \qquad
  g(x)=g(y)=0; \tag{14*} \\
\end{align*}

Equations (8), (10), (11) and (12) do not intersect the $\mathbb{Z}_2$
realm. (13*) is associativity, (14*) and (5*) are respectively
\eqref{M_n1} and \eqref{M_n2}. Substituting $a$ for $ax$ in (5*) and
using associativity we obtain (6*), simultaneously substituting $a$
for $v$, $b$ for $ax$ and $v$ for $b$ in (5*) and using associativity
we obtain (7*), finally multiplying (14*) by $v$ and using
associativity we obtain (9*).
\end{proof}

\section{Acknowledgments}  
I would like to thank FAPESP for it's financial support (process
number: 2009/51920-7) and most of all I'd like to thank Professor
Ivan P. Shestakov, for not only his great advice, which this work is
done under, but also the patience he had in advising me over the
years.

\bibliographystyle{amsalpha}
\bibliography{bibliografia2}

\end{document}